 	\definecolor{darkred}{rgb}{0.5,0,0}
 	\definecolor{darkgreen}{rgb}{0,0.5,0}
 	\definecolor{darkblue}{rgb}{0,0,0.5}
\newcommand{\thickset}{E}
\newcommand{\ii}{\mathrm{i}}
\newcommand{\drm}{\mathrm{d}}
\newcommand{\euler}{\mathrm{e}}
\newcommand{\N}{\mathbb{N}}
\newcommand{\R}{\mathbb{R}}
\newcommand{\C}{\mathbb{C}}
\newcommand{\cL}{\mathcal{L}}
\newcommand{\F}{\mathcal{F}}
\newcommand{\1}{\mathbf{1}}
\newcommand{\from}{\colon}
\newcommand{\norm}[1]{\left\lVert#1\right\rVert}
\newcommand{\abs}[1]{\left\lvert#1\right\rvert}
\newcommand{\re}{\operatorname{Re}}
\renewcommand{\Re}{\re}
\newcommand{\supp}{\operatorname{supp}}
\newcommand{\id}{\operatorname{Id}}
\newcommand{\ran}{\operatorname{Ran}}
\renewcommand{\epsilon}{\varepsilon}
\newtheorem{theorem}{Theorem}[section]
\newtheorem{proposition}[theorem]{Proposition}
\newtheorem{lemma}[theorem]{Lemma}
\newtheorem{corollary}[theorem]{Corollary}
\theoremstyle{definition}
\newtheorem{definition}[theorem]{Definition}
\theoremstyle{remark}
\newtheorem{remark}[theorem]{Remark}
\title{Sufficient criteria for stabilization properties in Banach spaces}
\author{Michela Egidi}
\affil{Ruhr-Universit\"at Bochum, Fakult\"at f\"ur Mathematik, Universit\"atsstra{\ss}e 150, 44780 Bochum, Germany, michela.egidi@rub.de}
\author{Dennis Gallaun}
\affil{Technische Universit\"at Hamburg, Institut f\"ur Mathematik, Am Schwarzenberg-Campus 3, 21073 Hamburg, Germany, \{dennis.gallaun, christian.seifert\}@tuhh.de}
\author[2,3]{Christian Seifert}
\affil{Christian-Albrechts-Universit\"at zu Kiel, Mathematisches Seminar, Heinrich-Hecht-Platz 6, 24118 Kiel, Germany}
\author{Martin Tautenhahn}
\affil{Universit\"at Leipzig, Mathematisches Institut, Augustusplatz 10, 04109 Leipzig, Germany, martin.tautenhahn@math.uni-leipzig.de}
\date{\vspace{-7ex}}
\begin{document}

\maketitle

\begin{abstract}
We study abstract sufficient criteria for open-loop stabilizability of linear control systems in a Banach space with a bounded control operator, which build up and generalize a sufficient condition for null-controllability in Banach spaces given by an uncertainty principle and a dissipation estimate.  For stabilizability these estimates are only needed for a single spectral parameter and, in particular, their constants do not depend on the growth rate w.r.t. this parameter. Our result unifies and generalizes earlier results obtained in the context of Hilbert spaces.
As an application we consider fractional powers of elliptic differential operators with constant coefficients in $L_p(\R^d)$ for $p\in [1,\infty)$ and thick control sets. 
\\[1ex]
\textsf{\textbf{Mathematics Subject Classification (2010).}} 47D06, 35Q93, 47N70, 93D20, 93B05, 93B07.
\\[1ex]
\textbf{\textsf{Keywords.}} Stabilizability, Banach space, $C_0$-semigroups, observability estimate, null-con\-trollabi\-li\-ty, fractional powers
\end{abstract}
\section{Introduction}
Let $X,U$ be Banach spaces, $(S_t)_{t\geq 0}$ a $C_0$-semigroup on $X$ with generator $-A$, $B\in \cL(U,X)$, $x_0\in X$. We consider the control system
\begin{equation}
\dot{x}(t) = -Ax(t) + Bu(t), \qquad t> 0, \qquad x(0) = x_0
\label{eq:IntroControlSystem}
\end{equation}
with a control function $u\in L_r((0,\infty);U)$ for some $r\in [1,\infty]$. In this paper we focus on the question whether the system \eqref{eq:IntroControlSystem} is open-loop stabilizable; that is, there is a control function $u\in L_r((0,\infty);U)$ such that the corresponding mild solution decays exponentially. We give a sufficient condition for open-loop stabilizability which is based on a well-known strategy to prove null-controllability. The system \eqref{eq:IntroControlSystem} is called null-controllable in time $T>0$ if there is a control function $u\in L_r((0,T);U)$ such that the corresponding solution of \eqref{eq:IntroControlSystem} satisfies $x(T)=0$. Clearly, null-controllability implies stabilizability. We weaken sufficient conditions for null-controllability to obtain more general criteria for stabilizability.

One possible approach to prove null-controllability is a method known as Lebeau-Robbiano strategy, originating in the seminal work by Lebeau and Robbiano \cite{LebeauR-95}, see also \cite{LebeauZ-98,JerisonL-99}. Subsequently, this strategy was generalised in various steps to $C_0$-semigroups on Hilbert spaces, see, e.g., \cite{Miller-10,TenenbaumT-11,WangZ-17,BeauchardP-18,NakicTTV-20}, and more recently to $C_0$-semigroups on Banach spaces, see \cite{GallaunST-20,BombachGST-20}. 
The essence of this approach is to show an uncertainty principle and a dissipation estimate for the dual system which are valid for an infinite sequence of so-called spectral parameters, and prove that the growth rate in the uncertainty principle is strictly smaller than the decay rate of the dissipation estimate. 
In Section \ref{sec:sufficient_condition} we show that for proving stabilizability in general Banach spaces one can drop the assumption on the growth and decay rate in the estimates. This was first observed in \cite{HuangWW-20,LiuWXY-20} in the context of Hilbert spaces.
Similar to what was used in a proof in \cite{LiuWXY-20}, we show that it is sufficient to prove the uncertainty principle and the dissipation estimate only for one single spectral parameter. This leads to a plain condition for stabilizability in Banach spaces which does not involve assumptions on the constant in the uncertainty principle. Let us stress that the latter improvement allows to apply our result to models where an uncertainty principle is avaible only for some spectral parameters as in \cite{LenzSS-20}. We will pursue this application in a forthcoming paper.

In order to prove the sufficient condition for stabilizability we introduce in Section \ref{sec:concepts} two auxiliary concepts, namely $\alpha$-controllability and a weak observability inequality. Similar to a result in \cite{TrelatWX-20} for Hilbert spaces, we show a duality result for these concepts in general Banach spaces. In order to deal with this more general framework, we use a separation theorem instead of a Fenchel-Rockafellar duality argument applied in \cite{TrelatWX-20}.

Finally, in Section \ref{sec:application}, we verify the sufficient conditions for fractional powers of elliptic differential operators $-A$ with constant coefficients on $L_p(\R^d)$ for $p\in [1,\infty)$ and where $B=\1_\thickset \from L_p (\thickset)\to L_p(\R^d)$ is the embedding from a so-called thick set $\thickset \subset \R^d$ to $\R^d$. This complements recent results in the Hilbert space $L_2(\R^d)$ for the fractional heat equation and more general Fourier multipliers, see \cite{HuangWW-20,Lissy-20,LiuWXY-20,Koenig-20,AlphonseM-21}.
 
\section{Stabilizability and related concepts}\label{sec:concepts}

Let $X,U$ be Banach spaces, $(S_t)_{t\geq 0}$ a $C_0$-semigroup on $X$ with generator $-A$, $B\in \cL(U,X)$, and $x_0\in X$. 
We consider the control system
\begin{equation}
\dot{x}(t) = -Ax(t) + Bu(t), \qquad t>0, \qquad x(0) = x_0
\label{eq:ControlSystem}
\end{equation}
where $u\in L_r((0,\infty);U)$ with some $r\in [1,\infty]$.
The unique mild solution of \eqref{eq:ControlSystem} is given by Duhamel's formula
$$x(t)  = S_tx_0 + \int_0^t S_{t-\tau}Bu(\tau) \,\drm \tau, \qquad t>0.$$
For $t > 0$ the \emph{controllability map} $L_t \in \cL(L_r((0,t);U),X)$ is given by
\begin{equation}
L_t u = \int_0^t S_{t-\tau}Bu(\tau) \,\drm \tau.
\label{eq:controllability_map}
\end{equation}

\begin{definition}
    The system \eqref{eq:ControlSystem} is called \emph{open-loop stabilizable w.r.t.\ $L_r((0,\infty);U)$} if there are $M\geq 1$ and $\omega <0$ such that for all $x_0\in X$ there exists $u \in L_r((0,\infty);U)$ such that
\begin{equation}
\lVert x(t) \rVert = \lVert S_t x_0 + L_tu \rVert \le M \euler^{\omega t} \lVert x_0 \rVert, \quad t\geq 0.
\label{eq:stabilizability}
\end{equation} 
Moreover, we call \eqref{eq:ControlSystem} \emph{cost-uniformly open-loop stabilizable w.r.t.\ $L_r((0,\infty);U)$} if there exists $M\geq 1$, $\omega <0$, and $C\geq 0$ such that for all $x_0\in X$ there exists $u \in L_r((0,\infty);U)$ such that 
	$$ \lVert u \rVert_{L_r((0,\infty);U)} \le C \lVert x_0\rVert \quad \text{and}\quad \lVert x(t) \rVert = \lVert S_t x_0 + L_tu \rVert \le M \euler^{\omega t} \lVert x_0 \rVert, \quad t\geq 0.$$
\end{definition}
\begin{remark}
Sometimes \eqref{eq:stabilizability} is replaced by the weaker condition $x\in L_2((0,\infty),X)$. For $r=2$ this is also called \emph{optimizability} or \emph{finite cost condition}.  Recall that one says that the system \eqref{eq:ControlSystem} is \emph{closed-loop stabilizable} or \emph{stabilizable by feedback} if there exists $K\in \cL(X,U)$ such that $-A+BK$ generates an exponentially stable $C_0$-semigroup. Then $K$ is called \emph{state feedback operator} and the control $u$ given by $u(t)=Kx(t)$ yields an exponentially stable solution $x$.
For an open-loop stabilizable system in a Hilbert space, the existence of a state feedback operator follows from classical Riccati theory, see e.g. \cite[Theorem IV.4.4]{Zabczyk-08}. Hence in Hilbert spaces every open-loop stabilizable system is also cost-uniformly open-loop stabilizable.
\end{remark}
Next we introduce two concepts, namely $\alpha$-controllability and weak observability inequalities, and discuss their close connection to open-loop stabilizability.

\subsection{\texorpdfstring{$\alpha$}{\textalpha}-controllability}

In this section we define $\alpha$-controllability and show that for $\alpha\in [0,1)$ it is equivalent to cost-uniform open-loop stabilizability.

\begin{definition}
    Let $\alpha\geq 0$. The system \eqref{eq:ControlSystem} is called \emph{$\alpha$-controllable in time $T$ w.r.t.\ $L_r((0,T);U)$} if for all $x_0\in X$ there exists $u \in L_r((0,T);U)$ such that 
	$$\lVert x(T) \rVert = \lVert S_T x_0 + L_Tu \rVert \le \alpha \lVert x_0 \rVert.$$
	Moreover, we call \eqref{eq:ControlSystem} \emph{cost-uniformly $\alpha$-controllable in time $T$ w.r.t.\ $L_r((0,T);U)$} if there exists $C\geq 0$ such that for all $x_0\in X$ there exists $u \in L_r((0,T);U)$ such that 
	$$ \lVert u \rVert_{L_r((0,T);U)} \le C \lVert x_0 \rVert \quad \text{and}\quad \lVert x(T) \rVert = \lVert S_T x_0 + L_Tu \rVert \le \alpha \lVert x_0 \rVert.$$
\end{definition}

\begin{remark}
For $\alpha=0$ the concept of $0$-controllability coincides with the usual notion of \emph{null-controllability}. If the system \eqref{eq:ControlSystem} is $\alpha$-controllable for all $\alpha >0$, it is usually called \emph{approximate null-controllable}. For the control system \eqref{eq:ControlSystem}, the quantity $\norm{u}_{L_r((0,T);U)}$ is called \emph{cost}. An $\alpha$-controllable system is in general not cost-uniformly $\alpha$-controllable, see \cite[Section 3.2.1]{TrelatWX-20}. However, if $\alpha=0$ these two notions are equivalent, see \cite[Theorem 2.2]{Carja-89}. 
\end{remark}

Similarly to \cite[Lemma 31]{TrelatWX-20} (see also \cite[Theorem 26]{TrelatWX-20}) we obtain the following relationship between cost-uniform $\alpha$-controllability and cost-uniform open-loop stabilizability.

\begin{proposition} \label{prop:solution-stable}
The system \eqref{eq:ControlSystem} is cost-uniformly open-loop stabilizable if and only if there exists $\alpha\in [0,1)$ and $T>0$ such that \eqref{eq:ControlSystem} is cost-uniformly $\alpha$-controllable in time $T$. 
\end{proposition}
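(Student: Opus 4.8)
\medskip

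The plan is to prove both implications by exploiting the semigroup structure together with iteration. For the direction ``cost-uniform $\alpha$-controllability in time $T$ $\Rightarrow$ cost-uniform open-loop stabilizability'', first I would fix $\alpha \in [0,1)$ and $T>0$ as in the hypothesis, with cost constant $C\geq 0$. Given $x_0 \in X$, I would steer to a state $x_1 = x(T)$ with $\norm{x_1} \le \alpha \norm{x_0}$ using a control of norm at most $C\norm{x_0}$ on $(0,T)$; then restart the system from $x_1$ and repeat, obtaining after $n$ steps a state $x_n$ with $\norm{x_n} \le \alpha^n \norm{x_0}$ at time $nT$, using a control of norm at most $C\alpha^{n-1}\norm{x_0}$ on the interval $((n-1)T, nT)$. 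Concatenating these controls yields $u \in L_r((0,\infty);U)$: since $\alpha<1$, the $L_r$-norms (resp.\ $L_\infty$-norms — the $r=\infty$ case needs the obvious modification, taking the supremum rather than summing) form a convergent geometric-type series, so $\norm{u}_{L_r((0,\infty);U)} \le C' \norm{x_0}$ for a suitable $C'$. For the decay of $x(t)$: on $[nT,(n+1)T]$ one has $x(t) = S_{t-nT}x_n + L_{t-nT}(\text{shifted control})$, and since $x_n$ has norm at most $\alpha^n\norm{x_0}$ and the control piece on that interval has norm at most $C\alpha^n\norm{x_0}$, boundedness of $(S_t)$ on $[0,T]$ and of $L_t$ gives $\norm{x(t)} \le M_0 \alpha^n \norm{x_0}$ for a constant $M_0$ depending only on $T$, $C$, $\alpha$, and $\sup_{[0,T]}\norm{S_t}$. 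Writing $\alpha^n = \euler^{n \log\alpha}$ and absorbing the offset $t - nT \in [0,T]$ into the constant produces the required bound $\norm{x(t)} \le M \euler^{\omega t}\norm{x_0}$ with $\omega = (\log\alpha)/T < 0$.

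\medskip

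For the converse direction, suppose \eqref{eq:ControlSystem} is cost-uniformly open-loop stabilizable with constants $M\geq 1$, $\omega<0$, $C\geq 0$. Given $x_0$, pick the stabilizing control $u$ with $\norm{u}_{L_r((0,\infty);U)} \le C\norm{x_0}$ and $\norm{x(t)}\le M\euler^{\omega t}\norm{x_0}$ for all $t\geq 0$. Choose $T>0$ large enough that $\alpha := M\euler^{\omega T} < 1$. Then restricting $u$ to $(0,T)$ gives a control in $L_r((0,T);U)$ with norm at most $C\norm{x_0}$ steering $x_0$ to $x(T)$ with $\norm{x(T)}\le \alpha\norm{x_0}$, which is exactly cost-uniform $\alpha$-controllability in time $T$ with $\alpha\in[0,1)$.

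\medskip

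The only slightly delicate point is the bookkeeping in the forward direction: one must check that concatenating the controls on the successive intervals $((n-1)T,nT)$ indeed yields an element of $L_r((0,\infty);U)$ with a controlled norm, and that the mild solution of the concatenated problem coincides piecewise with the restarted solutions — this is where the semigroup/cocycle property of $L_t$ and the uniqueness of mild solutions are used. For $r<\infty$ this follows from $\norm{u}_{L_r((0,\infty);U)}^r = \sum_{n\geq 1}\norm{u_n}_{L_r(((n-1)T,nT);U)}^r \le C^r\norm{x_0}^r \sum_{n\geq 1}\alpha^{r(n-1)} = C^r\norm{x_0}^r/(1-\alpha^r)$; the case $r=\infty$ uses $\sup_n \alpha^{n-1} = 1$ instead. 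I expect no genuine obstacle here — the argument is a standard ``steer, restart, concatenate'' iteration — so the main care is simply in tracking constants and handling the endpoint $r=\infty$ separately.
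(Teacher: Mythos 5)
Your proposal is correct and follows essentially the same route as the paper: the easy direction by choosing $T$ with $M\euler^{\omega T}\le\alpha$ and restricting the stabilizing control, and the converse by the ``steer, restart, concatenate'' iteration with the geometric-series cost estimate (separate treatment of $r=\infty$) and the interval-by-interval bound yielding decay rate $\omega=(\ln\alpha)/T$. The only cosmetic difference is that the paper disposes of the case $\alpha=0$ separately (where $(\ln\alpha)/T$ is not defined but the solution vanishes after time $T$), which you should also do.
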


\begin{proof}
Assume that \eqref{eq:ControlSystem} is cost-uniformly open-loop stabilizable, i.e.\ for all $x_0\in X$ there exists $u\in L_r((0,\infty);U)$ such that the solution of \eqref{eq:ControlSystem} satisfies $\lVert x(t) \rVert = \lVert S_t x_0 + L_tu \rVert \le M \euler^{\omega t} \lVert x_0 \rVert$ for all $t>0$ with uniform parameters $M\geq 1$ and $\omega<0$. For all $\alpha\in (0,1)$ there exists $T>0$ such that $M \euler^{\omega T} \le \alpha$ and hence \eqref{eq:ControlSystem} is $\alpha$-controllable in time $T$. Moreover, since the cost $\norm{u}_{L_r((0,\infty);U)}$ can be controlled uniformly w.r.t.\ the initial value $x_0$, the system \eqref{eq:ControlSystem} is even cost-uniformly $\alpha$-controllable in time $T$.

We now show the converse and assume that \eqref{eq:ControlSystem} is cost-uniformly $\alpha$-controllable in time $T$. For $\alpha=0$ we have $x(T) = 0$ and therefore $x(t)=0$ for all $t\geq T$, so the statement is trivial.
Thus, let $\alpha\in (0,1)$. Let $x_0 \in X$ and $u_0 \in L_r((0,T);U)$ such that $\lVert u_0 \rVert_{L_r((0,T);U)} \le C \lVert x_0 \rVert$ and $\lVert S_Tx_0 + L_T u_0 \rVert \le \alpha \lVert x_0 \rVert$. For $k\in \N_0$ we recursively define $x_{k+1} = S_Tx_k + L_T u_k$ and choose $u_k \in L_r((0,T);U)$ such that
$$ \lVert u_k \rVert_{L_r((0,T);U)} \le C \lVert x_k \rVert \quad \text{and} \quad \lVert S_Tx_k + L_T u_k \rVert \le \alpha \lVert x_k \rVert.$$
Define $u\from [0,\infty) \to U$ as the concatenation
$$u(t) = u_k(t-kT) \quad \text{if } t\in [kT,(k+1)T).$$
Then, $\norm{x_k} \leq \alpha^k \norm{x_0}$ for all $k\in\N_0$. For $r\in [1,\infty)$, we have
\begin{align*}
\lVert u \rVert^r_{L_r((0,\infty);U)} &= \int_0^\infty \lVert u(\tau) \rVert^r \drm \tau \le \sum_{k=0}^\infty \int_{kT}^{(k+1)T} \lVert u(\tau) \rVert^r \drm \tau \le C^r \sum_{k=0}^\infty  \lVert x_{k+1} \rVert^r \\
&\le C^r \sum_{k=0}^\infty \alpha^{rk} \lVert x_{0} \rVert^r \le C^r \frac{1}{1-\alpha^r} \lVert x_0 \rVert^r,
\end{align*}
and hence $u\in L_r((0,\infty);U)$. For $r=\infty$, we similarly estimate
\[ \lVert u \rVert_{L_\infty((0,\infty);U)} = \sup_{k\in\N_0} \norm{u_k}_{L_\infty((0,T);U)}\leq C\sup_{k\in\N_0} \norm{x_k}\leq C\sup_{k\in\N_0} \alpha^k \norm{x_0} \leq C\norm{x_0},\]
and therefore also $u\in L_\infty((0,\infty);U)$.

The control $u$ generates a trajectory 
$$x(t) = S_tx_0 + \int_0^t S_{t-\tau} B u(\tau) \drm \tau, \quad t>0$$
satisfying $x(kT) = x_{k}$ for all $k\in\N_0$.
Let $M_S\geq 1$ such that $\sup_{t\in [0,T]} \lVert S_t \rVert_{\cL(X)} \le M_S$. Then for all $k\in\N_0$ and $t\in [kT,(k+1)T)$, by H\"older's inequality, we have
\begin{align*}
\lVert x(t) \rVert &= \Bigl\lVert S_{t-kT}x_k + \int_0^{t-kT} S_{t-kT-\tau} B u_k(\tau-kT) \drm \tau \Bigr\rVert
\le M_S \lVert x_k \rVert + M_S  \lVert B \rVert \int_0^T \lVert u_k(\tau) \rVert \drm \tau \\
&\le M_S \lVert x_k \rVert + M_S \lVert B \rVert  T^{1/{r'}} \lVert u_k \rVert_{L_r((0,T);U)}
\le M_S (1+ \lVert B \rVert T^{1/{r'}} C)\alpha^k \lVert x_0 \rVert,
\end{align*}  
where $r'\in [1,\infty]$ such that $1/r + 1/r' =1$ (and $1/\infty = 0$ as usual). 
Since $\ln\alpha < 0$ and $\alpha^{k+1} = \euler^{(k+1)T \frac{\ln \alpha}{T}} \le \euler^{\frac{\ln \alpha}{T} t}$ for $t\in [kT, (k+1)T)$ we infer that
$$ \lVert x(t) \rVert \le \frac{M_S}{\alpha}(1+ \lVert B \rVert T^{1/{r'}} C) \euler^{\frac{\ln \alpha}{T} t} \lVert x_0 \rVert.$$
Thus, we obtain the assertion with $M= \frac{M_S}{\alpha}(1+ \lVert B \rVert T^{1/{r'}} C)\geq 1$ and $\omega = \ln \alpha /T <0$.
\end{proof}
\subsection{Weak observability inequalities}

In this section we prove the duality between cost-uniform $\alpha$-controllability and a weak observability estimate for the dual system. 

\begin{definition}
    Let $X,Y$ be Banach spaces, $(S_t)_{t\geq 0}$ a semigroup on $X$, $C\in\cL(X,Y)$, $T>0$, and assume that $[0,T]\ni t\mapsto \norm{CS_t x}_Y$ is measurable for all $x\in X$. Let $r\in[1,\infty]$. Then we say that a \emph{weak observability inequality} is satisfied if there exist $C_{\mathrm{obs}}\geq 0$ and $\alpha\geq 0$ such that for all $x\in X$ we have
\begin{equation}\label{eq:weak_obs}
\norm{S_Tx}_X\leq \begin{cases}
  C_{\mathrm{obs}} \left( \int_0^T \lVert C S_t x \rVert_{Y}^{r} \drm t \right)^{1/r} + \alpha \lVert x \rVert_{X}& \text{if } r \in [1,\infty) , \\
  C_{\mathrm{obs}} \sup\limits_{t \in [0,T]} \lVert C S_t x \rVert_{Y} + \alpha \lVert x \rVert_{X}& \text{if } r = \infty.
  \end{cases}
\end{equation} 
\end{definition}

\begin{remark}
For $\alpha=0$ the weak observability inequality coincides with the usual \emph{observability inequality} which corresponds to so-called \emph{final state observability}.
Note that for all $C_0$-semigroups with $\lVert S_t \rVert \le M \euler^{\omega t}$ for $t\geq 0$ inequality \eqref{eq:weak_obs} holds with $\alpha = M\euler^{\max\{\omega,0\} T}$ for all $C_{\mathrm{obs}}\geq 0$ and all operators $C\in\cL(X,Y)$. However, we are mainly interested in the case $\alpha\in [0,1)$, where weak observability inequalities are linked to open-loop stabilizability of the predual system, see Proposition \ref{prop:solution-stable} and the following Theorem \ref{thm:duality}.
\end{remark}

\begin{theorem}\label{thm:duality}
Let $X,U$ be Banach spaces, $(S_t)_{t\geq 0}$ a $C_0$-semigroup on $X$, $T>0$, $r\in [1,\infty]$ and $L_T\in \cL(L_r((0,T);U),X)$ the controllability map defined in \eqref{eq:controllability_map}. Let further $C\geq 0$ and $\alpha \geq 0$. Then the following statements are equivalent:
\begin{enumerate}[(a)]
	\item \label{thm:duality_s1} For every $x \in X$ and $\epsilon >0$ there exists $u\in L_r((0,T);U)$ with
	\begin{equation*}
	\lVert u \rVert_{L_r((0,T);U)} \le C  \lVert x \rVert_X \quad  \text{and} \quad \lVert S_Tx+L_Tu \rVert_X < (\alpha+\epsilon) \lVert x \rVert_X.
	\end{equation*}
		\item \label{thm:duality_s2} For all $x' \in X'$ we have
		\begin{equation*}
		\lVert S_T'x' \rVert_{X'} \le \begin{cases}
  C \left( \int_0^T \lVert B' S_t' x' \rVert_{U'}^{r'} \drm t \right)^{1/r'} + \alpha \lVert x' \rVert_{X'}& \text{if } r' \in [1,\infty) , \\
  C \sup\limits_{t \in [0,T]} \lVert B' S_t' x' \rVert_{U'} + \alpha \lVert x' \rVert_{X'}& \text{if } r' = \infty,
  \end{cases}
		\end{equation*}
	where $r'\in [1,\infty]$ with $1/r + 1/r' = 1$.
\end{enumerate}
\end{theorem}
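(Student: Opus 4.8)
The plan is to recast both statements in terms of the support function of one closed convex symmetric subset of $X$ and to bridge them by the Hahn--Banach separation theorem, mimicking the Hilbert space argument of \cite[Lemma 31]{TrelatWX-20} but replacing the Fenchel--Rockafellar step by separation, as announced in the introduction. Write $Z:=L_r((0,T);U)$, let $B_Z,B_X$ be the closed unit balls of $Z$ and $X$, and let $L_T'\in\cL(X',Z')$ be the Banach space adjoint of $L_T$. \emph{Step 1 (adjoint of the controllability map).} Commuting the pairing with the Bochner integral in \eqref{eq:controllability_map} gives, for $u\in Z$ and $x'\in X'$,
\[ \langle L_T u, x'\rangle = \int_0^T \langle u(\tau), B'S_{T-\tau}'x'\rangle\,\drm\tau . \]
Since the adjoint semigroup $(S_t')_{t\geq 0}$ is weak-$*$ continuous, $t\mapsto B'S_t'x'$ is weak-$*$ continuous into $U'$, hence $t\mapsto\lVert B'S_t'x'\rVert_{U'}$ is lower semicontinuous, hence measurable; thus the measurability hypothesis in the definition of the weak observability inequality is automatic. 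The standard formula for the dual norm of a Bochner space (an approximation/measurable selection argument applied to the right-hand side above) together with the substitution $\tau\mapsto T-\tau$ yields $\lVert L_T'x'\rVert_{Z'}=\bigl(\int_0^T\lVert B'S_t'x'\rVert_{U'}^{r'}\,\drm t\bigr)^{1/r'}$ if $r'<\infty$ and $\lVert L_T'x'\rVert_{Z'}=\sup_{t\in[0,T]}\lVert B'S_t'x'\rVert_{U'}$ if $r'=\infty$. Hence statement \eqref{thm:duality_s2} is equivalent to: $\lVert S_T'x'\rVert_{X'}\leq C\lVert L_T'x'\rVert_{Z'}+\alpha\lVert x'\rVert_{X'}$ for all $x'\in X'$.

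\emph{Step 2 (geometric reformulation of \eqref{thm:duality_s1}).} Fix $x\in X$ and put $K_x:=\overline{L_T(C\lVert x\rVert B_Z)}$, a closed, convex, symmetric subset of $X$. Replacing $u$ by $-u$ and letting $\epsilon\downarrow 0$ one checks that \eqref{thm:duality_s1} holds for this $x$ if and only if $\operatorname{dist}(S_Tx,K_x)\leq\alpha\lVert x\rVert$, which in turn is equivalent to $S_Tx\in\overline{K_x+\alpha\lVert x\rVert B_X}$; the clause ``for all $\epsilon>0$'' in \eqref{thm:duality_s1} is precisely what permits the passage to the closure here. Therefore \eqref{thm:duality_s1} is equivalent to $S_Tx\in\overline{K_x+\alpha\lVert x\rVert B_X}$ for every $x\in X$.

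\emph{Step 3 (separation and recovery of \eqref{thm:duality_s2}).} The set $K_x+\alpha\lVert x\rVert B_X$ is convex and symmetric, so by Hahn--Banach separation $S_Tx$ lies in its closure if and only if $\langle S_Tx,x'\rangle$ does not exceed its support function for every $x'\in X'$. Since the support function of a set equals that of its closure,
\[ \sup_{w\in K_x+\alpha\lVert x\rVert B_X}\langle w,x'\rangle = C\lVert x\rVert\sup_{\lVert u\rVert_Z\leq 1}\langle L_Tu,x'\rangle + \alpha\lVert x\rVert\lVert x'\rVert_{X'} = C\lVert x\rVert\,\lVert L_T'x'\rVert_{Z'}+\alpha\lVert x\rVert\lVert x'\rVert_{X'}. \]
Thus \eqref{thm:duality_s1} is equivalent to $\langle x,S_T'x'\rangle\leq\lVert x\rVert\bigl(C\lVert L_T'x'\rVert_{Z'}+\alpha\lVert x'\rVert_{X'}\bigr)$ for all $x\in X$ and $x'\in X'$; swapping the two quantifiers and taking the supremum over $\lVert x\rVert\leq 1$ gives $\lVert S_T'x'\rVert_{X'}\leq C\lVert L_T'x'\rVert_{Z'}+\alpha\lVert x'\rVert_{X'}$ for all $x'$, which by Step 1 is exactly \eqref{thm:duality_s2}.

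I expect the main obstacles to be the careful bookkeeping of closures in Step 2, so that a genuinely closed convex set is handed to the separation theorem, and the identification in Step 1 of $\lVert L_T'x'\rVert_{Z'}$ with the $L_{r'}$-norm of $t\mapsto B'S_t'x'$, where the endpoint cases $r'\in\{1,\infty\}$ and the measurable-selection argument for the dual norm of the Bochner space need attention.
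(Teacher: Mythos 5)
Your proof is correct and follows essentially the same route as the paper: both reduce the equivalence to the Hahn--Banach separation criterion for membership of $S_Tx$ in the closure of a convex set built from $L_T(C B_Z)+\alpha B_X$, and both identify $\lVert L_T'x'\rVert_{Z'}$ with the $L_{r'}$-norm of $t\mapsto \lVert B'S_t'x'\rVert_{U'}$ (the paper cites \cite[Theorem 2.1]{Vieru-05} for the latter, which is exactly the measurable-selection step you flag as delicate). The only cosmetic difference is that you separate pointwise in $x$ and then take a supremum over the unit ball, whereas the paper applies the two-set separation lemma directly to $A=S_T(B_X)$ and $B=L_T(CB_Z)+\alpha B_X$.
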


\begin{remark}
 Theorem \ref{thm:duality} can be rephrased as: cost-uniform $\alpha$-controllability for \eqref{eq:ControlSystem} is equivalent to a weak observability inequality of the corresponding dual system.
Note that in the case $\alpha=0$ the above theorem gives the well-known duality between approximate null-controllability and final state observability.
\end{remark}

In contrast to \cite{TrelatWX-20} we do not use a Fenchel-Rockafellar duality argument to prove Theorem \ref{thm:duality}, but the following well-known separation theorem. We cite here a version from \cite[Lemma 1.2]{Carja-89}, for a proof see \cite[Theorem I.5.10, Lemma II.4.1]{Goldberg-66}. 
\begin{lemma}\label{lem:separation}
Let $A,B$ be convex sets in a Banach space $X$. Then $A \subset \overline{B}$ if and only if 
$$\sup_{x\in A} \langle x,x'\rangle_{X,X'} \le \sup_{x\in B} \langle x,x'\rangle_{X,X'} \qquad \text{for all } x'\in X'.$$
\end{lemma}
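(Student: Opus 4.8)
The plan is to reformulate the displayed inequality in terms of support functions and then reduce the nontrivial implication to the Hahn--Banach separation theorem. For a set $S\subseteq X$ I write $h_S(x'):=\sup_{x\in S}\langle x,x'\rangle_{X,X'}\in[-\infty,+\infty]$ for its support function, so that the hypothesis of the lemma is precisely $h_A\le h_B$ pointwise on $X'$. The first observation I would record is that, since every $x'\in X'$ is continuous, passing to the closure does not change the support function: $h_B=h_{\overline B}$. (If $X$ is a complex space one reads $\langle\cdot,x'\rangle_{X,X'}$ as $\re\langle\cdot,x'\rangle_{X,X'}$ throughout, so that the suprema are real-valued; the underlying real space is the one on which the separation argument is then carried out.)

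The forward implication $A\subseteq\overline B\Rightarrow h_A\le h_B$ is immediate and uses no convexity: if $A\subseteq\overline B$, monotonicity of the supremum gives $h_A\le h_{\overline B}=h_B$.

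For the converse I would argue by contraposition and let the separation theorem do the work. Assume $A\not\subseteq\overline B$ and pick $a\in A\setminus\overline B$. Because $B$ is convex, its closure $\overline B$ is a \emph{closed convex} set, while $\{a\}$ is a compact convex set disjoint from it; the Hahn--Banach separation theorem, in the form asserting strict separation of a compact convex set from a disjoint closed convex set, then yields $x'\in X'$ and $\gamma\in\R$ with $\langle b,x'\rangle_{X,X'}\le\gamma<\langle a,x'\rangle_{X,X'}$ for all $b\in\overline B$. Taking the supremum over $b$ gives $h_B(x')=h_{\overline B}(x')\le\gamma<\langle a,x'\rangle_{X,X'}\le h_A(x')$, so the inequality $h_A\le h_B$ fails at this particular $x'$. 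This establishes the contrapositive, hence the converse. The degenerate cases are harmless: if $B=\emptyset$ then $h_B\equiv-\infty$, which forces $A=\emptyset$ and makes the inclusion trivial, while $A=\emptyset$ renders both sides of the equivalence automatically true.

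The sole genuine ingredient---and the step I expect to be the crux---is the separation theorem in exactly the form ``a point not lying in a closed convex set can be strictly separated from it by a continuous linear functional.'' This is precisely where the convexity of $B$ (inherited by $\overline B$) is indispensable, and it is the content drawn from the cited sources. Everything else is bookkeeping: verifying $h_B=h_{\overline B}$ from continuity, and, in the complex setting, carefully identifying $\re\langle\cdot,x'\rangle_{X,X'}$ with the support functions that appear in the statement so that the real-linear separation functional can be promoted to an element of $X'$.
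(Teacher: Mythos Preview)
Your argument is correct. The paper does not actually supply a proof of this lemma; it merely cites \cite[Lemma 1.2]{Carja-89} and refers the reader to \cite[Theorem I.5.10, Lemma II.4.1]{Goldberg-66} for a proof. What you have written is precisely the standard argument one would extract from those references: the forward implication is monotonicity of the supremum together with $h_B=h_{\overline B}$, and the converse is the Hahn--Banach strict separation of a point from a disjoint closed convex set. Your handling of the edge cases ($A$ or $B$ empty) and of the complex scalar field via real parts is appropriate. As you implicitly observe, convexity of $A$ is not actually used anywhere; only convexity of $B$ matters, since it is what makes $\overline B$ convex and hence amenable to separation.
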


\begin{proof}[Proof of Theorem \ref{thm:duality}]
We consider the convex sets
$$A = \{S_Tx : \lVert x \rVert_X \le 1\} \quad \text{and}\quad B = \{L_Tu + \alpha x : \lVert u \rVert_{L_r((0,T);U)} \le C, \lVert x \rVert_X \le 1\}.$$
We observe that the following three statements are equivalent:
\begin{enumerate}[(a)]
 \item \label{s1}$A\subset \overline{B}$
 \item \label{s2} for all $\epsilon >0$ and $x_1\in X$ with $ \lVert x_1 \rVert_X\le 1$ there exists $u\in L_r((0,T);U)$ with $\lVert u \rVert_{L_r((0,T);U)} \allowbreak \le C$  and $x_2\in X$ with $\lVert x_2 \rVert_X \le 1$ such that 
$$\lVert S_Tx_1+L_Tu+\alpha x_2 \rVert_X < \epsilon.$$ 
\item \label{s3} for all $\epsilon >0$ and $x_1\in X$ with $\lVert x_1 \rVert_X\le 1$ there exists $u\in L_r((0,T);U)$ with $\lVert u \rVert_{L_r((0,T);U)} \allowbreak \le C$ such that $$\lVert S_Tx_1+L_Tu \rVert_X < \alpha+\epsilon .$$
\end{enumerate}
While \eqref{s1}$\Leftrightarrow$\eqref{s2} and \eqref{s2}$\Rightarrow$\eqref{s3} are obvious, we note that \eqref{s2} follows from \eqref{s3} by choosing $x_2 = -  (S_Tx_1+L_Tu) / (\alpha + \epsilon)$. Since 
\[
\bigl\lVert S_Tx / \lVert x \rVert +L_T u \bigr\rVert_X = \frac{1}{\lVert x \rVert}   \\\bigl\lVert S_T x+L_T \lVert x \rVert u \bigr\rVert_X 
\]
for all $x\in X\setminus \{0\}$, we find that \eqref{s3} (and thus also \eqref{s1} and \eqref{s2}) is equivalent to statement \eqref{thm:duality_s1} of the theorem. 
Next, for $x'\in X'$ we compute
\begin{equation*}
\sup_{x\in A} \langle x,x'\rangle_{X,X'} = \sup_{\lVert x \rVert_X\le 1} \langle S_Tx,x'\rangle_{X,X'} = \lVert S_T'x' \rVert_{X'}
\end{equation*}
and 
\begin{align*}
\sup_{x\in B} \langle x,x'\rangle_{X,X'} 
& = \sup_{\substack{\lVert u \rVert_{L_r((0,T);U)}\le C,\\ \lVert x \rVert_X\le 1}} \langle L_Tu+\alpha x,x'\rangle_{X,X'}\\
& = \sup_{\lVert u \rVert_{L_r((0,T);U)}\le C} \langle L_Tu,x'\rangle_{X,X'} + \sup_{\lVert x \rVert_X\le 1} \alpha \langle x,x'\rangle_{X,X'} \\
&= C\lVert L_T'x' \rVert_{(L_r((0,T);U))'} + \alpha \lVert x' \rVert_{X'}.
\end{align*}
Finally by \cite[Theorem 2.1]{Vieru-05} we have
$$\lVert L_T'x' \rVert_{(L_r((0,T);U))'} = \begin{cases}
  \left( \int_0^T \lVert B' S_t' x' \rVert_{U'}^{r'} \drm t \right)^{1/r'} & \text{if } r' \in [1,\infty) , \\
  \sup\limits_{t \in [0,T]} \lVert B' S_t' x' \rVert_{U'} & \text{if } r' = \infty,
  \end{cases}$$
	where $r'\in [1,\infty]$ such that $1/r + 1/r' = 1$. Hence $\sup_{x\in A} \langle x,x'\rangle_{X,X'} \le \sup_{x\in B} \langle x,x'\rangle_{X,X'}$ is equivalent to statement \eqref{thm:duality_s2} of the theorem and the claim follows from Lemma \ref{lem:separation}.
\end{proof}

\section{Sufficient conditions for stabilizability}\label{sec:sufficient_condition}

In this section we give a sufficient condition for weak observability inequalities in terms of an uncertainty principle and a dissipation estimate, similar to \cite{HuangWW-20,LiuWXY-20}. 
We emphasize that instead of assuming the uncertainty principle and the dissipation estimate for a family $(P_{\lambda})_{\lambda > 0}$ with certain dependencies of the constants on the ``spectral parameter'' $\lambda$, we need these assumptions to hold only for one single operator $P$. We will relate our result to Lemma 2.2 in \cite{HuangWW-20} and Theorem 2.1 in \cite{GallaunST-20}. Using duality we give, similar to \cite[Theorem 4.1]{LiuWXY-20}, a sufficient condition for open-loop stabilizability in Banach spaces without any compatible condition between the uncertainty principle and a dissipation estimate.

\begin{proposition}\label{lem:spectral+diss-obs}
Let $X$ and $Y$ be Banach spaces, $C\in \cL(X,Y)$, $P\in \cL(X)$, $(S_t)_{t\geq 0}$ a semigroup on $X$, $M \geq 1$ and $\omega \in \R$ such that $\lVert S_t \rVert \leq M \euler^{\omega t}$ for all $t \geq 0$, and assume that for all $x\in X$ the mapping $t\mapsto \lVert C S_t x \rVert_Y$ is measurable.
Further, let $r \in [1,\infty]$, $T>0$ and $C_1,C_2\from(0,T]\to [0,\infty)$ continuous functions such that for all $x\in X$ and $t\in (0,T]$ we have
\begin{align} 
\lVert P S_t x \rVert_{ X } \le C_1(t) \lVert C  P S_t x \rVert_{Y }
\label{eq:ass:uncertainty} , 
\end{align}
and
\begin{align}
\lVert (\id-P) S_t x \rVert_{X} \le C_2(t) \lVert x \rVert_{X} \label{eq:ass:dissipation}.
\end{align}
Then there exist $C_{\mathrm{obs}}\geq 0$ and $\alpha\geq 0$ with
\begin{equation} \label{eq:obs}
\forall x\in X: \quad \norm{S_Tx}_X\leq \begin{cases}
  C_{\mathrm{obs}} \left( \int_0^T \lVert C S_t x \rVert_{Y}^{r} \drm t \right)^{1/r} + \alpha \lVert x \rVert_{X}& \text{if } r \in [1,\infty) , \\
  C_{\mathrm{obs}} \sup\limits_{t \in [0,T]} \lVert C S_t x \rVert_{Y} + \alpha \lVert x \rVert_{X}& \text{if } r = \infty
  \end{cases}
\end{equation}
satisfying for all $\delta\in [0,1)$
\begin{align*} 
C_{\mathrm{obs}} \le \frac{M\euler^{\omega_+ T}}{(1-\delta)T^{1/r}} \max_{t\in [\delta T,T]} C_1(t) \quad \text{and} \quad
\alpha \le \frac{M\euler^{\omega_+ T}}{(1-\delta)T} \int_{\delta T}^T \left(C_1(t)\lVert C \rVert_{\cL (X,Y)}+1\right) C_2(t) \drm t,
\end{align*}
where $\omega_+ = \max\{\omega,0\}$ and $T^{1/r} = 1$ if $r=\infty$.
\end{proposition}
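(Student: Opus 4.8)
The plan is to derive a pointwise-in-$t$ estimate and then average it over $t\in[\delta T,T]$. Fix $x\in X$ and $t\in(0,T]$. Writing $S_tx=PS_tx+(\id-P)S_tx$, the triangle inequality together with the uncertainty principle \eqref{eq:ass:uncertainty} and the dissipation estimate \eqref{eq:ass:dissipation} give
\[
\norm{S_tx}_X\le \norm{PS_tx}_X+\norm{(\id-P)S_tx}_X\le C_1(t)\norm{CPS_tx}_Y+C_2(t)\norm{x}_X .
\]
To remove $P$ from the first term I would write $CPS_tx=CS_tx-C(\id-P)S_tx$ and use \eqref{eq:ass:dissipation} once more, so that $\norm{CPS_tx}_Y\le\norm{CS_tx}_Y+\norm{C}_{\cL(X,Y)}C_2(t)\norm{x}_X$. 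This yields the pointwise bound
\[
\norm{S_tx}_X\le C_1(t)\norm{CS_tx}_Y+\bigl(C_1(t)\norm{C}_{\cL(X,Y)}+1\bigr)C_2(t)\norm{x}_X,\qquad t\in(0,T].
\]

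Next, since $\norm{S_{T-t}}_{\cL(X)}\le M\euler^{\omega(T-t)}\le M\euler^{\omega_+T}$ for every $t\in[0,T]$ and $S_Tx=S_{T-t}S_tx$, we get $\norm{S_Tx}_X\le M\euler^{\omega_+T}\norm{S_tx}_X$, hence
\[
\norm{S_Tx}_X\le M\euler^{\omega_+T}\Bigl(C_1(t)\norm{CS_tx}_Y+\bigl(C_1(t)\norm{C}_{\cL(X,Y)}+1\bigr)C_2(t)\norm{x}_X\Bigr)\qquad\text{for all }t\in(0,T].
\]
Now fix $\delta\in[0,1)$. Integrating this inequality over $t\in[\delta T,T]$, an interval of length $(1-\delta)T$ (all functions involved are measurable by the hypothesis on $t\mapsto\norm{CS_tx}_Y$ and continuity of $C_1,C_2$, and the bounds are understood in $[0,\infty]$), and dividing by $(1-\delta)T$ gives
\[
\norm{S_Tx}_X\le\frac{M\euler^{\omega_+T}}{(1-\delta)T}\int_{\delta T}^{T}C_1(t)\norm{CS_tx}_Y\,\drm t+\frac{M\euler^{\omega_+T}}{(1-\delta)T}\Bigl(\int_{\delta T}^{T}\bigl(C_1(t)\norm{C}_{\cL(X,Y)}+1\bigr)C_2(t)\,\drm t\Bigr)\norm{x}_X .
\]
The second summand is already of the form $\alpha\norm{x}_X$ with $\alpha$ bounded exactly as claimed.

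For the first summand, when $r\in[1,\infty)$ I would bound $C_1(t)\le\max_{s\in[\delta T,T]}C_1(s)$ and apply H\"older's inequality on $[\delta T,T]$,
\[
\int_{\delta T}^{T}\norm{CS_tx}_Y\,\drm t\le\bigl((1-\delta)T\bigr)^{1/r'}\Bigl(\int_{\delta T}^{T}\norm{CS_tx}_Y^{r}\,\drm t\Bigr)^{1/r}\le\bigl((1-\delta)T\bigr)^{1/r'}\Bigl(\int_{0}^{T}\norm{CS_tx}_Y^{r}\,\drm t\Bigr)^{1/r},
\]
where $1/r+1/r'=1$; using $\bigl((1-\delta)T\bigr)^{1/r'}/\bigl((1-\delta)T\bigr)=\bigl((1-\delta)T\bigr)^{-1/r}\le(1-\delta)^{-1}T^{-1/r}$ (valid since $0<1-\delta\le1$ and $r\ge1$) this produces the stated value of $C_{\mathrm{obs}}$. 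For $r=\infty$ one instead bounds $\int_{\delta T}^{T}C_1(t)\,\drm t\le(1-\delta)T\max_{s\in[\delta T,T]}C_1(s)$ and keeps $\sup_{t\in[0,T]}\norm{CS_tx}_Y$, obtaining $C_{\mathrm{obs}}\le M\euler^{\omega_+T}\max C_1\le\tfrac{M\euler^{\omega_+T}}{1-\delta}\max C_1$ with $T^{1/r}=1$. In both cases this is \eqref{eq:obs} with the asserted constants, and the case $\delta=0$ is subsumed by reading $[\delta T,T]$ as $(0,T]$ and allowing $+\infty$ on the right-hand side when $C_1$ or $C_1C_2$ fails to be bounded/integrable near $0$.

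The argument is short precisely because, in contrast to the Lebeau--Robbiano iteration, no comparison between a growth rate and a decay rate is needed: a single operator $P$ with a single pair of estimates \eqref{eq:ass:uncertainty}--\eqref{eq:ass:dissipation} already forces the weak observability inequality. Consequently there is no substantial obstacle; the only points requiring a little care are the uniform treatment of the endpoint case $r=\infty$ and the harmless weakening $\bigl((1-\delta)T\bigr)^{1/r}\ge(1-\delta)T^{1/r}$ used to pass from the natural constant $\bigl((1-\delta)T\bigr)^{-1/r}M\euler^{\omega_+T}\max C_1$ to the cleaner stated one.
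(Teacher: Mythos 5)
Your proof is correct and follows essentially the same route as the paper: the identical pointwise decomposition $S_tx=PS_tx+(\id-P)S_tx$ combined with $CPS_tx=CS_tx-C(\id-P)S_tx$, the bound $\norm{S_Tx}\le M\euler^{\omega_+T}\norm{S_tx}$, and averaging over $[\delta T,T]$. The only (immaterial) difference is that the paper first reduces to $r=1$ and applies H\"older on $[0,T]$ at the outset, whereas you apply H\"older on $[\delta T,T]$ at the end, which in fact gives the slightly sharper constant $\bigl((1-\delta)T\bigr)^{-1/r}$ before you weaken it to the stated one.
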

\begin{proof}
Assume we have shown the statement of the proposition in the case $r=1$, i.e.\ for all $x \in X$ we have
 \[
 \lVert S_T x \rVert_{X} \leq C_{\mathrm{obs}} \int_0^T \lVert CS_tx \rVert_Y \drm t +\alpha \lVert x \rVert_X.
 \]
 Then, for all $r \in [1,\infty]$ and all $x \in X$ using H\"older's inequality we obtain  
 \[
  \lVert S_T x \rVert_{X} \leq C_{\mathrm{obs}} T^{1/r'} \left(\int_0^T \lVert CS_tx \rVert_Y^r \drm t \right)^{1/r} + \alpha \lVert x \rVert_X,
 \]
 where $r' \in [1,\infty]$ is such  that $1/r + 1/r' = 1$. Since $T^{-1}T^{1/r'} = T^{-1/r}$ the statement of the proposition follows. Thus, it is sufficient to prove the case $r = 1$.
 \par
Let $t \in (0,T]$ and $x\in X$. Using \eqref{eq:ass:uncertainty} and \eqref{eq:ass:dissipation} we obtain
\begin{align}
 \lVert S_tx \rVert & \leq \lVert PS_tx \rVert + \lVert (\id-P)S_tx \rVert
\leq  C_1(t) \lVert CPS_tx \rVert + \lVert (\id-P)S_tx \rVert \nonumber \\
& \leq  C_1(t) \lVert CS_tx \rVert + C_1(t) \lVert C \rVert_{\cL (X,Y)} \lVert(\id-P)S_tx \rVert  + \lVert(\id-P)S_tx \rVert  \nonumber \\
& \leq  C_1(t) \lVert CS_tx \rVert  + \left(C_1(t)\lVert C \rVert_{\cL (X,Y)}+1\right) C_2(t) \lVert x \rVert_X. \label{eq:to_be_integrated}
\end{align}
Since $(S_t)_{t\geq 0}$ is a semigroup we get
$$\lVert S_Tx \rVert = \lVert S_{T-t} S_t x\lVert  \leq M \euler^{\omega_+ T} \lVert S_tx \rVert,$$
where $\omega_+ = \max\{\omega,0\}$. Since $t\mapsto \lVert C S_t x \rVert_Y$ is measurable by assumption, integrating \eqref{eq:to_be_integrated} with respect to $t\in [\delta T,T]$ we obtain
\begin{align*}
\frac{(1-\delta)T}{M\euler^{\omega_+T}}\lVert S_Tx \rVert & \leq \int_{\delta T}^T C_1(t)\lVert CS_tx \rVert \drm t + \int_{\delta T}^T \left(C_1(t)\lVert C \rVert_{\cL (X,Y)}+1\right) C_2(t) \drm t \,\lVert x \rVert_X \\
& \leq \max_{t\in [\delta T,T]} C_1(t) \int_{\delta T}^T \lVert CS_tx \rVert \drm t + \int_{\delta T}^T \left(C_1(t)\lVert C \rVert_{\cL (X,Y)}+1\right) C_2(t) \drm t \,\lVert x \rVert_X.
\end{align*}
The claim now follows by estimating $\int_{\delta T}^T \lVert CS_tx \rVert \drm t \le \int_{0}^T \lVert CS_tx \rVert \drm t$ and multiplying both sides by $M\euler^{\omega_+T}/(1-\delta)T$.
\end{proof}

The advantage of Proposition \ref{lem:spectral+diss-obs} is the explicit dependence of $C_{\mathrm{obs}}$ and $\alpha$ on the functions $C_1, C_2$ which allows to give conditions to ensure $\alpha \in [0,1)$. By Theorem \ref{thm:duality} and Proposition \ref{prop:solution-stable}, the case where $\alpha \in [0,1)$ is important to prove open-loop stabilizability for the predual system.

\begin{remark}
In Proposition \ref{lem:spectral+diss-obs} we can replace the uncertainty principle in \eqref{eq:ass:uncertainty} by
\begin{equation*}
\forall x\in X: \quad \lVert PS_{T_0}x \rVert \le \begin{cases}
  C_1 \left( \int_0^{T_0} \lVert C P S_t x \rVert_{Y}^{r} \drm t \right)^{1/r} & \text{if } r \in [1,\infty) , \\
  C_1 \sup\limits_{t \in [0,T]} \lVert C P S_t x \rVert_{Y} & \text{if } r = \infty
  \end{cases}
\end{equation*}	
for some $C_1 >0$ and $0<T_0 \le T$. Similar as in the proof of Proposition \ref{lem:spectral+diss-obs}, for $r \in [1,\infty)$ we then estimate 
	\begin{align*}
		& \norm{S_{T_0} x}_{X} \leq \norm{PS_{T_0} x}_{X}+\norm{(\id -P) S_{T_0} x}_{X}
		\leq C_1 \Big( \int_0^{T_0} \norm{C P S_t x}_{Y}^r \drm t\Big)^{1/r} + C_2(T_0)\norm{x}_{X}\\
		& \leq C_1 \Big( \int_0^{T_0} 2^{r-1}( \norm{C S_t x}_{Y}^r +\norm{C}_{\mathcal{L}(X,Y)}^r C_2(t)^r \norm{x}_{X}^r) \drm t \Big)^{1/r} +C_2(T_0)\norm{x}_{X}\\
		& \leq 2^{1-1/r} C_1 \Big( \int_0^{T_0} \norm{C S_t x}_{Y}^r \drm t \Big)^{1/r} +\Big(2^{1-1/r} C_1\norm{C}_{\mathcal{L}(X,Y)} \norm{C_2}_{L_r(0,T_0)}  + C_2(T_0)\Big)\norm{x}_{X}.
	\end{align*}
	Since $\norm{S_{T} x}_{X}=\norm{S_{T- T_0} S_{T_0} x}_{X} \leq M e^{w_+ T}\norm{S_{T_0}x}_{X}$, we obtain \eqref{eq:obs} with
\begin{align*} 
C_{\mathrm{obs}} \le M\euler^{\omega_+ T} 2^{1-1/r} C_1 \quad \text{and} \quad
\alpha \le M\euler^{\omega_+ T} \Big(2^{1-1/r} C_1\norm{C}_{\mathcal{L}(X,Y)} \norm{C_2}_{L_r(0,T_0)}  + C_2(T_0)\Big).
\end{align*}
The case $r=\infty$	is similar and the term $2^{1-1/r}$ can be set to $1$. 
\end{remark}

\begin{remark}
\label{rem:application}
Let us relate Proposition \ref{lem:spectral+diss-obs} to the results obtained in \cite{HuangWW-20} and \cite{GallaunST-20,BombachGST-20}. By choosing the functions $C_1,C_2\from (0,T] \to [0,\infty)$ appropriately we can mimic the assumptions of \cite[Lemma 2.2]{HuangWW-20} and \cite[Theorem 2.1]{GallaunST-20}, respectively. For given $T,\lambda>0$ suppose we have for all $x\in X$ and $t\in (0,T]$ the inequalities \eqref{eq:ass:uncertainty} and \eqref{eq:ass:dissipation} with
\begin{equation}
C_1(t) = d_0 \euler^{d_1\lambda^{\gamma_1}} \quad \text{and} \quad C_2(t) = d_2 \euler^{-d_3\lambda^{\gamma_2}t^{\gamma_3}},
\label{eq:choiceC1C2}
\end{equation} 
where $d_0,d_1,d_2,d_3,\gamma_1,\gamma_2,\gamma_3>0$. Then Proposition \ref{lem:spectral+diss-obs} implies for all $\delta \in (0,1)$ the weak observability inequality \eqref{eq:obs} with 
\begin{align*} 
C_{\mathrm{obs}} \le \frac{Md_0}{\delta T^{1/r}} d_0 \euler^{d_1\lambda^{\gamma_1}+\omega_+ T} \quad \text{and} \quad
\alpha \le Md_2\left(d_0\lVert C \rVert+1\right) \euler^{-d_3\lambda^{\gamma_2}(\delta T)^{\gamma_3}+d_1\lambda^{\gamma_1}+\omega_+ T}.
\end{align*}
Imposing conditions on $T$ and $\lambda$ we can achieve $\alpha\in [0,1)$. We list here only some interesting cases:
\begin{enumerate}[(a)]
	\item\label{case1} Assume $\gamma_1>\gamma_2$. Let $\gamma_3>1-\gamma_2/\gamma_1$, i.e.\ $\gamma_1\gamma_3/(\gamma_1-\gamma_2)>1$, and $T>0$ large enough such that
	$$\ln \left(Md_2(d_0\lVert C \rVert+1)\right) < \left(\frac{d_3}{2 d_1}\right)^{\frac{\gamma_2}{\gamma_1-\gamma_2}} \frac{d_3}{2} (\delta T)^{\frac{\gamma_1\gamma_3}{\gamma_1-\gamma_2}} - \omega_+ T.$$
	Then for $\lambda = \left(\frac{d_3 (\delta T)^{\gamma_3}}{2 d_1}\right)^{\frac{1}{\gamma_1-\gamma_2}}$ we have $\alpha\in (0,1)$. Indeed, one easily computes
	\begin{align*} 
\alpha &\le Md_2\left(d_0\lVert C \rVert+1\right) \exp\Bigl(-d_3\bigl(\tfrac{d_3 (\delta T)^{\gamma_3}}{2 d_1}\bigr)^{\frac{\gamma_2}{\gamma_1-\gamma_2}} (\delta T)^{\gamma_3}+d_1\bigl(\tfrac{d_3 (\delta T)^{\gamma_3}}{2 d_1}\bigr)^{\frac{\gamma_1}{\gamma_1-\gamma_2}}+\omega_+ T\Bigr) \\
&= Md_2\left(d_0\lVert C \rVert+1\right) \exp \Bigl(- \bigl(\tfrac{d_3}{2 d_1}\bigr)^{\frac{\gamma_2}{\gamma_1-\gamma_2}} \tfrac{d_3}{2} (\delta T)^{\frac{\gamma_1\gamma_3}{\gamma_1-\gamma_2}}+\omega_+ T\Bigr) < 1.
\end{align*}
	\item\label{case2} Assume $\gamma_1 = \gamma_2$. Let $T>\delta(d_1/d_3)^{1/\gamma_3}$ and 
	$$\lambda > \left(\frac{\ln \left(Md_2(d_0\lVert C \rVert+1)\right) + \omega_+ T}{d_3(\delta T)^{\gamma_3}-d_1}\right)^{\frac{1}{\gamma_1}}>0.$$
	Then again $\alpha\in (0,1)$.
	\item\label{case3} Assume $\gamma_1 < \gamma_2$. For given $T>0$ let $\lambda >0$ large enough such that
	$$\ln \left(Md_2(d_0\lVert C \rVert+1)\right) + \omega_+ T< d_3\lambda^{\gamma_2}(\delta T)^{\gamma_3}-d_1\lambda^{\gamma_1}.$$
	Then $\alpha \in (0,1)$.
	\item Assume $\gamma_1 < \gamma_2$. Let $\lambda^*>0$ and suppose there exists $P\in\cL(X)$ such that $P_\lambda=P$ for all $\lambda >\lambda^*$, and such that the inequalities \eqref{eq:ass:uncertainty} and \eqref{eq:ass:dissipation} hold with $C_1,C_2$ as in \eqref{eq:choiceC1C2}. Then by \cite[Theorem 2.1]{GallaunST-20}, the weak observability inequality \eqref{eq:obs} holds with $\alpha=0$. 
	\item Assume $\omega_+ = 0$. Then for arbitrary $\lambda,\gamma_1,\gamma_2,\gamma_3 >0$ we can achieve $\alpha \in (0,1)$ by choosing $T>0$ large enough.
\end{enumerate}
Note that, in contrast to the cases \eqref{case1} and \eqref{case2}, in \eqref{case3} we can ensure $\alpha \in (0,1)$ for every $T>0$ by choosing $\lambda >0$ appropriately. The cases \eqref{case1}-\eqref{case3} are very similar to what was shown in \cite[Lemma 2.2]{HuangWW-20}, where the inequalities \eqref{eq:ass:uncertainty} and \eqref{eq:ass:dissipation} with \eqref{eq:choiceC1C2} where assumed to hold for all $\lambda >1$. Note that here the assumptions are only needed for some particular $\lambda >0$.
\end{remark}

By restricting to $\gamma_3 = 1$, Proposition \ref{lem:spectral+diss-obs} and the duality in Theorem \ref{thm:duality} yield the following plain sufficient condition for cost-uniform open-loop stabilizability similar to the Hilbert space result in \cite[Theorem 4.1]{LiuWXY-20}.

\begin{corollary}\label{cor:spectral+diss-control}
Let $X$ and $U$ be Banach spaces, $B\in \cL(U,X)$ and $P\in \cL(X)$ such that
\begin{align} 
\ran(P)\subset \overline{\ran(PB)}.
\label{eq:uncertainty_dual}
\end{align}
Further let $(S_t)_{t\geq 0}$ a $C_0$-semigroup on $X$, and $M \geq 1$, $\omega \in \R$ such that $\lVert S_t \rVert \leq M \euler^{\omega t}$ for all $t \geq 0$.
Assume there exist $M_P\geq 1$ and $\omega_P > \omega_+:=\max\{\omega,0\}$ such that
\begin{align}
\forall x\in X~ \forall t>0: \quad\lVert S_t (\id-P) x \rVert_{X} \le M_P \euler^{-\omega_P t}\lVert x \rVert_{X} \label{eq:dissipation_dual}.
\end{align}
Then the system \eqref{eq:ControlSystem} is cost-uniformly open-loop stabilizable. 
\end{corollary}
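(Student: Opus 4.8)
The plan is to dualise and then feed everything into the machinery already set up. By Proposition~\ref{prop:solution-stable} it suffices to find some $T>0$ and some $\alpha\in[0,1)$ for which \eqref{eq:ControlSystem} is cost-uniformly $\alpha$-controllable in time $T$. By Theorem~\ref{thm:duality} (the implication from the weak observability inequality to the controllability statement, applied with a slightly enlarged constant), this in turn follows as soon as, for some $T>0$ and some $\alpha\in[0,1)$, inequality \eqref{eq:weak_obs} holds for the dual datum: the semigroup $(S_t')_{t\ge0}$ on $X'$, the operator $B'\in\cL(X',U')$ in the role of the observation operator, and the conjugate exponent $r'$. To obtain this I would apply Proposition~\ref{lem:spectral+diss-obs} with $X,Y,C,P,(S_t),r$ replaced by $X',U',B',P',(S_t'),r'$.

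So the task reduces to verifying the hypotheses of Proposition~\ref{lem:spectral+diss-obs} in this dual setting. The growth bound is free, since $\lVert S_t'\rVert_{\cL(X')}=\lVert S_t\rVert_{\cL(X)}\le M\euler^{\omega t}$, and measurability of $t\mapsto\lVert B'S_t'x'\rVert_{U'}$ holds because this function equals $\sup_{\lVert u\rVert_U\le 1}\abs{\langle S_tBu,x'\rangle_{X,X'}}$, a supremum of continuous functions (here the $C_0$-property of $(S_t)_{t\ge0}$ enters) and hence lower semicontinuous. The dissipation estimate \eqref{eq:ass:dissipation} becomes $\lVert(\id-P')S_t'x'\rVert_{X'}\le C_2(t)\lVert x'\rVert_{X'}$, and taking adjoints in \eqref{eq:dissipation_dual} together with $\lVert(S_t(\id-P))'\rVert=\lVert S_t(\id-P)\rVert$ gives it with the continuous function $C_2(t)=M_P\euler^{-\omega_P t}$. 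The uncertainty principle \eqref{eq:ass:uncertainty} becomes $\lVert P'S_t'x'\rVert_{X'}\le C_1\lVert B'P'S_t'x'\rVert_{U'}$; I would derive this, with a constant $C_1$ independent of $t$ and $x'$, from the range condition \eqref{eq:uncertainty_dual}. This last point is the one that needs care: it is the place where the separation theorem (Lemma~\ref{lem:separation}) is invoked, in the form that converts \eqref{eq:uncertainty_dual} into the uniform dual estimate $\lVert P'x'\rVert_{X'}\le C_1\lVert B'P'x'\rVert_{U'}$ for all $x'\in X'$. With $C_1(t)\equiv C_1$ and $C_2$ as above, both continuous on $(0,T]$, Proposition~\ref{lem:spectral+diss-obs} applies for every $T>0$.

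It remains to tune the free parameters. Proposition~\ref{lem:spectral+diss-obs} yields, for each $\delta\in[0,1)$ and each $T>0$, inequality \eqref{eq:obs} for the dual datum with a finite $C_{\mathrm{obs}}$ and with
\[
\alpha\;\le\;\frac{M\euler^{\omega_+T}}{(1-\delta)T}\,\bigl(C_1\lVert B\rVert+1\bigr)\int_{\delta T}^{T}M_P\euler^{-\omega_P t}\,\drm t\;\le\;\frac{MM_P\bigl(C_1\lVert B\rVert+1\bigr)}{\omega_P(1-\delta)T}\,\euler^{(\omega_+-\delta\omega_P)T}.
\]
The assumption $\omega_P>\omega_+$ now enters decisively: it permits the choice $\delta\in(\omega_+/\omega_P,1)$, for which the exponent $\omega_+-\delta\omega_P$ is strictly negative, so the right-hand side tends to $0$ as $T\to\infty$. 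Fixing such a $\delta$ and then $T$ large enough that $\alpha<1$, Theorem~\ref{thm:duality} converts \eqref{eq:obs} into the controllability statement for this $T$; choosing $\epsilon>0$ with $\alpha':=\alpha+\epsilon<1$ we obtain cost-uniform $\alpha'$-controllability in time $T$, and Proposition~\ref{prop:solution-stable} gives cost-uniform open-loop stabilizability. The only genuinely non-routine step is the passage from \eqref{eq:uncertainty_dual} to the uniform constant $C_1$; the rest is adjoint bookkeeping together with the parameter optimisation just described.
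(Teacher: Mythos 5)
Your proposal is correct and follows essentially the same route as the paper: dualize, feed $(S_t')_{t\geq 0}$, $P'$ and $B'$ into Proposition~\ref{lem:spectral+diss-obs} with constant $C_1$ and $C_2(t)=M_P\euler^{-\omega_P t}$, choose $\delta$ with $\delta\omega_P>\omega_+$ (the paper takes $\delta=(\omega_P+\omega_+)/2\omega_P$) and $T$ large so that $\alpha<1$, then conclude via Theorem~\ref{thm:duality} and Proposition~\ref{prop:solution-stable}. The only divergence is the justification of the uniform estimate $\lVert P'x'\rVert_{X'}\le C_1\lVert B'P'x'\rVert_{U'}$: the paper obtains it from the range-inclusion result cited from \cite{Carja-89}, not from Lemma~\ref{lem:separation} alone, which by itself converts inclusions of bounded convex sets (rather than of ranges) into dual norm inequalities and so does not directly produce the uniform constant.
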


\begin{proof}
We apply Proposition \ref{lem:spectral+diss-obs} to the dual semigroup $(S_t')_{t\geq 0}$ on $X'$, $Y:=U'$, $C:=B'$, and $P$ replaced by its dual operator $P'$. Note that $(S_t')_{t\geq 0}$ is exponentially bounded since $(S_t)_{t\geq0}$ is exponentially bounded. The measurability of the functions $t\mapsto \norm{B'S_t'x'}_{U'}$ for all $x'\in X'$ follows from duality and the description of dual norms via the Hahn--Banach theorem. 
It is well-known, see \cite{Carja-89}, that \eqref{eq:uncertainty_dual} implies the existence of $C>0$ such that 
\begin{align*} 
\forall x'\in X': \quad\lVert P'x' \rVert_{X'} \le C \lVert B'P'x' \rVert_{U'}.
\end{align*}
Further \eqref{eq:dissipation_dual} implies 
\begin{align*}
\forall x'\in X'~ \forall t>0: \quad\lVert (\id-P') S'_t x' \rVert_{X'} \le M_P \euler^{-\omega_P t} \lVert x' \rVert_{X'}.
\end{align*}
Thus, by Proposition \ref{lem:spectral+diss-obs} with $C_1(t)=C$, $C_2(t)=M_P \euler^{-\omega_P t}$ and $\delta = (\omega_P + \omega_+)/2\omega_P$ we obtain for all $T>0$ and $r\in [1,\infty]$ that
\begin{equation*}
\forall x'\in X': \quad \norm{S_T'x'}_{X'}\leq \begin{cases}
    C_{\mathrm{obs}} \left(\int_0^T \norm{B'S_t' x'}_{U'}^{r'} \drm t\right)^{1/{r'}} + \alpha \lVert x' \rVert_{X'}& \text{if } r'\in [1,\infty),\\
    C_{\mathrm{obs}}\sup_{t\in [0,T]} \norm{C'S_t' x'}_{U'} +  \alpha \lVert x' \rVert_{X'}& \text{if } r'=\infty,
  \end{cases}
\end{equation*}
with 
\begin{align*} 
C_{\mathrm{obs}} \le \frac{2M\euler^{\omega_+ T}}{(1-\frac{\omega_+}{\omega_P})T^{1/r}}C \quad \text{and} \quad
\alpha \le M M_P\left(C\lVert B \rVert_{\cL(U,X)}+1\right) \euler^{-\frac{1}{2}(\omega_P - \omega_+)T}.
\end{align*}
For 
$$T>\frac{2\ln\left((M M_P\left(C\lVert B \rVert_{\cL(U,X)}+1\right)\right)}{\omega_P - \omega_+}$$
we have $\alpha\in [0,1)$ and the assertion follows from Theorem \ref{thm:duality} and Proposition \ref{prop:solution-stable}.
\end{proof}

\begin{remark}
The condition $\ran(P)\subset \overline{\ran(PB)}$ for the control operator $B$ does not require any constants. In applications this means that for the corresponding uncertainty principle for the dual system we do not need any assumption on the growth order of the constants in terms of the spectral parameter. An instance of this is when one considers the system \eqref{eq:ControlSystem} with $H$ being the harmonic oscillator in $L_2(\R^d)$, i.e. $H= -\Delta + \vert x \vert^2$,
 and $B$ the characteristic function of a measurable subset of $\R^d$ with positive measure. 
Indeed, it was shown in \cite[Theorem 2.1]{BeauchardJPS-21} and in \cite[Lemma 3.2]{HuangWW-20} that a spectral inequality with $P$ being any element of the spectral family associated to $H$ 
is valid under different geometric assumptions on the measurable subset with different growth orders of the constant with respect to the spectral parameter, while the dissipation estimate satisfies an estimate 
like the one in the corollary above (see, e.g., 
\cite[Eq. (4.17)]{HuangWW-20}).
\end{remark}

\begin{remark}
System \eqref{eq:ControlSystem} is called \emph{complete (or rapidly) open-loop stabilizable} if  for all $\nu>0$ the system 
\begin{equation}
\dot{x}(t) = -(A+\nu)x(t) + Bu(t), \qquad t>0, \qquad x(0) = x_0
\label{eq:ControlSystem_nu}
\end{equation}
is open-loop stabilizable. Analogously to \cite[Theorem 4.1]{LiuWXY-20}, by Corollary \ref{cor:spectral+diss-control} we obtain the following sufficient conditions for complete open-loop stabilizability: Let $(P_k)_{k\in \N}$ in $\cL(X)$ satisfying \eqref{eq:uncertainty_dual} for all $k\in\N$ and $(M_k)_{k\in \N}$ in $[1,\infty)$, $(\omega_k)_{k\in \N}$ in $\R$ with $\omega_k \to \infty$ as $k\to \infty$ such that
\begin{align*}
\forall x\in X~ \forall t>0: \quad\lVert S_t (\id-P_k) x \rVert_{X} \le M_k \euler^{-\omega_k t}\lVert x \rVert_{X}.
\end{align*}
Then \eqref{eq:ControlSystem} is complete open-loop stabilizable. Indeed, for all $\nu>0$ there exists $k\in \N$ such that $\omega_k > \omega_+ + \nu$ and by Corollary \ref{cor:spectral+diss-control} the system \eqref{eq:ControlSystem_nu} is open-loop stabilizable.
\end{remark}

\section{Application: Fourier Multipliers and Fractional Powers}\label{sec:application}

We denote by $\mathcal{S}(\R^d)$ the Schwartz space of rapidly decreasing functions, which is dense in $L_p(\R^d)$ for all $p \in[1,\infty)$. The space of tempered distributions, i.e.\ the topological dual space of $\mathcal{S}(\R^d)$, is denoted by $\mathcal{S}'(\R^d)$. We define the Fourier transformation $\F\from \mathcal{S}(\R^d)\to\mathcal{S}(\R^d)$ by
\[\F f (\xi) := \int_{\R^d} f(x) \euler^{-\ii\xi\cdot x}\drm x \quad(\xi\in\R^d).\]
Then $\F$ is bijective, continuous, and has a continuous inverse given by
\[\F^{-1} f(x) = \frac{1}{(2\pi)^d} \int_{\R^d} f(\xi) \euler^{\ii x\cdot \xi}\drm \xi \quad(x\in\R^d)\]
for all $f\in \mathcal{S}(\R^d)$. 
By duality, we can extend the Fourier transformation as a bijection on $\mathcal{S}'(\R^d)$ as well.

Let $m \in \N$ and $a\from\R^d\to\C$,
\begin{equation*}
 a(\xi) := \sum _{\abs{\alpha} \leq m} a_\alpha \xi^\alpha \quad (\xi \in \R^d),
\end{equation*}
be a polynomial of degree $m$ with coefficients $a_\alpha \in \C$ and assume that $a$ is strongly elliptic, i.e.\ there exists $c>0$ and $\omega\in\R$ such that
\[\Re a(\xi) \geq c\abs{\xi}^m - \omega \quad(\xi\in \R^d).\]
Let $s\in (0,1]$. Then
\[\Re((a(\xi)+\omega)^s) \geq (\Re a(\xi)+\omega)^s \geq c^s \abs{\xi}^{sm} \quad(\xi\in\R^d).\]

Let $\tilde{m}\in\N_0$ be the largest integer less than $sm$, and $b\from\R^d\to\C$,
\[b(\xi):=\sum _{\abs{\alpha} \leq \tilde{m} } b_\alpha \xi^\alpha \quad (\xi \in \R^d).\]
We consider $a_{s,b}:= (a+\omega)^s+b$. 
Then there exists $\nu\in\R$ such that
\begin{equation}
\label{eq:realpart}
\Re a_{s,b}(\xi) = \Re (a(\xi)+\omega)^s + \Re b(\xi) \geq c^s\abs{\xi}^{sm} - \nu \quad(\xi\in\R^d).
\end{equation}

Note that $a_{s,b}$ may not be differentiable at $0$. However, it can be shown that for $t>0$ we have $e^{-ta_{s,b}}\in L_1(\R^d)$ and $\F^{-1}e^{-ta_{s,b}}\in L_1(\R^d)$. Indeed, $e^{-ta_{s,b}}$ decays faster than any polynomial. Thus, $e^{-ta_{s,b}}\in L_1(\R^d)$ and $\F^{-1}e^{-ta_{s,b}}\in C^\infty(\R^d)$. Moreover, the Riemann--Lebesgue lemma yields $\F^{-1}e^{-ta_{s,b}}\in C_0(\R^d)$. Then by subordination techniques (see e.g.\ \cite{KruseMS-21}), one can show that $f\mapsto \F^{-1}e^{-ta_{s,0}}* f$ yields a bounded operator on $L_1(\R^d)$. By a perturbation argument, also $f\mapsto \F^{-1}e^{-ta_{s,b}}* f$ is bounded on $L_1(\R^d)$. Since this operator is also translation invariant, $\F^{-1}e^{-ta_{s,b}}$ is given by a finite Borel measure (cf.\ \cite[Theorem 2.58]{Grafakos-08}) and therefore $\F^{-1}e^{-ta_{s,b}}\in L_1(\R^d)$.
%
%
%
%
%

Taking into account Young's inequality, for $p\in[1,\infty]$ and $t\geq 0$ we define $S^{(s),p}_t\from L_p(\R^d)\to L_p(\R^d)$ by
\begin{equation*} 
 S^{(s),p}_0f:= f,\quad S^{(s),p}_tf := \F^{-1}\euler^{-ta_{s,b}}*f \quad (t>0).
\end{equation*}
It is easy to see that $S^{(s),p}$ is a $C_0$-semigroup for $p\in [1,\infty)$ and $S^{(s),\infty}$ is a weak$^*$ continuous exponentially bounded semigroup.

\begin{definition}
A set $E\subset \R^d$ is called \emph{thick} if $E$ is measurable and there exist $\rho\in (0,1]$ and $L\in (0,\infty)^d$ such that
\[\abs{E\cap \Bigl(\bigtimes_{i=1}^d (0,L_i) + x\Bigr)} \geq \rho \prod_{i=1}^d L_i \quad(x\in\R^d).\]
\end{definition}

\begin{proposition}[{Logvinenko--Sereda theorem, see e.g.\ \cite{Kovrijkine-01}}]
\label{prop:Logvinenko-Sereda}
    Let $E\subset \R^d$ be thick. Then there exist $d_0,d_1>0$ such that for all $p\in [1,\infty]$, all $\lambda>0$ and all $f\in L_p(\R^d)$ with $\supp \F f \subset  [-\lambda,\lambda]^d$ we have
    \[\norm{f}_{L_p(\R^d)} \leq d_0 \euler^{d_1\lambda}\norm{f}_{L_p(E)} \quad(f\in L_p(\R^d)).\]
\end{proposition}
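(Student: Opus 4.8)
The plan is to follow Kovrijkine's proof of the Logvinenko--Sereda theorem \cite{Kovrijkine-01}, which reduces the inequality to a one-dimensional Remez-type estimate for band-limited functions, and to obtain the general case by iterating over the coordinates. I first treat $d=1$. Let $\rho\in(0,1]$ and $L>0$ witness the thickness of $E\subset\R$, and partition $\R$ into the intervals $I_j=(jL,(j+1)L)$, $j\in\Z$, so that $\norm{f}_{L_p(\R)}^p=\sum_{j\in\Z}\norm{f}_{L_p(I_j)}^p$ for $p\in[1,\infty)$ (with the obvious supremum modification for $p=\infty$). Since $\supp\F f\subset[-\lambda,\lambda]$, the function $f$ extends to an entire function of exponential type $\lambda$, and Bernstein's inequality yields $\norm{f^{(n)}}_{L_\infty(\R)}\le\lambda^n\norm{f}_{L_\infty(\R)}$ together with the complex growth bound $\abs{f(x+\ii y)}\le\euler^{\lambda\abs{y}}\norm{f}_{L_\infty(\R)}$.

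The second ingredient is a good/bad dichotomy. Call $I_j$ \emph{bad} if $\norm{f}_{L_p(I_j)}^p<\tfrac16\norm{f}_{L_p(I_{j-1}\cup I_j\cup I_{j+1})}^p$ and \emph{good} otherwise. Summing the defining inequality over bad indices and noting that each $I_k$ appears at most three times on the right gives $\sum_{j\text{ bad}}\norm{f}_{L_p(I_j)}^p\le\tfrac12\norm{f}_{L_p(\R)}^p$, hence $\sum_{j\text{ good}}\norm{f}_{L_p(I_j)}^p\ge\tfrac12\norm{f}_{L_p(\R)}^p$. On a good interval the $L_p$-average of $f$ over $I_j$ is comparable to its average over $I_{j-1}\cup I_j\cup I_{j+1}$; combined with a Nikolskii inequality for band-limited functions and the growth bound above, this shows that, after rescaling $I_j$ to $[0,1]$, $f$ agrees up to a bounded factor with a function analytic on a fixed complex neighbourhood of $[0,1]$ whose supremum there is at most $\euler^{c\lambda L}\bigl(\tfrac{1}{L}\int_{I_j}\abs{f}^p\bigr)^{1/p}$. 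A Remez-type inequality for such functions then yields, for every measurable $\omega\subset I_j$ with $\abs{\omega}\ge\rho\abs{I_j}$,
\[
\norm{f}_{L_p(I_j)}\le\Bigl(\tfrac{C}{\rho}\Bigr)^{c\lambda L+1}\norm{f}_{L_p(\omega)},
\]
with $C\ge1$ universal. Applying this with $\omega=E\cap I_j$, which has measure $\ge\rho L$ by thickness, and summing over good intervals gives $\norm{f}_{L_p(\R)}^p\le2\sum_{j\text{ good}}\norm{f}_{L_p(I_j)}^p\le2(C/\rho)^{p(c\lambda L+1)}\norm{f}_{L_p(E)}^p$, i.e.\ the claim for $d=1$ with $d_0=2C/\rho$ and $d_1=cL\ln(C/\rho)$.

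For $d\ge2$ one runs the analogous decomposition of $\R^d$ into boxes $\bigtimes_{i=1}^d(j_iL_i,(j_i+1)L_i)$ and applies the one-dimensional estimate along lines parallel to the coordinate axes inside good boxes, using Fubini to ensure that a set of such lines of definite relative measure meets $E$ in a subset of relative measure $\gtrsim\rho$; iterating over the $d$ coordinates and multiplying the constants gives the assertion with $d_1$ depending on $\rho$, $d$ and $\max_i L_i$. This book-keeping is carried out in \cite{Kovrijkine-01}; alternatively one can apply the multidimensional Brudnyi--Ganzburg inequality directly on good boxes. Finally, $p=\infty$ follows either by the same argument with suprema replacing integrals or by letting $p\to\infty$ with uniform constants.

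The step I expect to be the main obstacle is the analytic estimate on good intervals, i.e.\ producing the factor $(C/\rho)^{c\lambda L}$ with exponent \emph{linear} in $\lambda$. It rests on Jensen's formula, used to bound the number of zeros of the entire extension of $f$ in a complex neighbourhood of a good interval by $O(\lambda L)$, followed by the Remez inequality for functions of bounded valence. Band-limitedness enters decisively here, through Bernstein's inequality and the exponential-type growth; weakening it would destroy the linear exponent and hence the form $\euler^{d_1\lambda}$ of the bound.
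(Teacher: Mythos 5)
The paper does not actually prove this proposition: it is quoted from the literature (Kovrijkine \cite{Kovrijkine-01}) and used as a black box, so there is no internal proof to compare your argument against. Judged on its own, your sketch has the right overall architecture of the cited proof --- partition of $\R^d$ into boxes at the thickness scales, a good/bad dichotomy so that good boxes carry a definite fraction of $\norm{f}_{L_p}^p$, a Remez-type estimate for the analytic extension on good boxes, and an induction on the dimension --- and the final bookkeeping (bad boxes carry at most half the mass, summation over good boxes, $d_1\sim L\ln(C/\rho)$, hence an exponent linear in $\lambda$) has the correct shape.

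There is, however, a genuine gap at exactly the step you single out as the main obstacle, and it is caused by your choice of dichotomy. Declaring $I_j$ bad when $\norm{f}_{L_p(I_j)}^p<\tfrac{1}{6}\norm{f}_{L_p(I_{j-1}\cup I_j\cup I_{j+1})}^p$ does give $\sum_{\text{good}}\norm{f}_{L_p(I_j)}^p\geq\tfrac{1}{2}\norm{f}_{L_p(\R)}^p$, but on a good interval it only tells you that $f$ is not much smaller on $I_j$ than on its two neighbours. That does not control the analytic continuation of $f$ on a complex disc around $I_j$: the growth bound $\abs{f(x+\ii y)}\leq\euler^{\lambda\abs{y}}\norm{f}_{L_\infty(\R)}$ involves the \emph{global} supremum, and a good interval in your sense may still carry an arbitrarily small fraction of $\norm{f}_{L_\infty(\R)}$, so the ratio $M=\sup_{D}\abs{f}/\sup_{I_j}\abs{f}$ that enters the Jensen/Remez lemma need not be bounded by $\euler^{c\lambda L}$. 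Kovrijkine's dichotomy is different: a box $Q$ is good when $\int_Q\abs{\partial^\alpha f}^p$ is bounded by $(C\lambda)^{p\abs{\alpha}}$ times a suitable multiple of $\int_Q\abs{f}^p$ for \emph{all} multi-indices $\alpha$; Bernstein's inequality guarantees that the bad boxes in this sense carry little mass, and it is precisely this control of all derivatives that, via a Taylor expansion at a well-chosen point of $Q$, yields an analytic extension to a fixed complex neighbourhood with $M\leq\euler^{c\lambda L}$. If you replace your neighbour-comparison condition by the derivative-based one, the remainder of your outline (Remez for analytic functions with exponent proportional to $\ln M$, summation over good boxes, the Fubini/induction step in higher dimensions) matches the proof the paper is citing.
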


Let $\eta\in C_{\mathrm c}^\infty ([0,\infty) )$ with $0\leq\eta\leq 1$ such that $\eta (r) = 1$ for $r\in [0,1/2]$ and $\eta (r) = 0$ for $r\geq 1$. 
For $\lambda > 0$ we define $\chi_\lambda\from \R^d\to \R$ by $\chi_\lambda (\xi) = \eta (\lvert \xi \rvert / \lambda)$. Since $\chi_\lambda \in \mathcal{S}(\R^d)$, we have $\mathcal{F}^{-1}\chi_\lambda \in \mathcal{S}(\R^d)$ and for all $p \in [1,\infty]$ we define $P_\lambda \from  L_p(\R^d) \to L_p(\R^d)$ by $P_\lambda f = (\mathcal{F}^{-1}\chi_\lambda) * f$.

\begin{proposition}
\label{prop:Dissipation}
    There exists $K\geq 0$ such that for all $s\in (0,1]$, $p\in[1,\infty]$ and all $\lambda>(2^{sm+4}\nu_+/c^s)^{1/(sm)}$, $t\geq 0$ and $f\in L_p(\R^d)$ we have
    \[\norm{(I-P_\lambda) S^{(s),p}_t f}_p \leq K \euler^{-2^{-sm-4}c^st\lambda^{sm}}\norm{f}_p.\]
\end{proposition}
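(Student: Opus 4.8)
The strategy is to reduce the norm estimate for the operator $(I-P_\lambda)S^{(s),p}_t$ to an $L_1$-bound on the convolution kernel $\F^{-1}\bigl((1-\chi_\lambda)\euler^{-ta_{s,b}}\bigr)$, via Young's inequality, and then to control that $L_1$-norm uniformly in the relevant parameters by exploiting the Gaussian-type decay coming from \eqref{eq:realpart}. First I would observe that $(I-P_\lambda)S^{(s),p}_t f = g_{t,\lambda}\ast f$ with $g_{t,\lambda} = \F^{-1}\bigl((1-\chi_\lambda)\euler^{-ta_{s,b}}\bigr)$, so by Young's inequality it suffices to show $\norm{g_{t,\lambda}}_{L_1(\R^d)} \leq K\euler^{-2^{-sm-4}c^st\lambda^{sm}}$ with $K$ independent of $s,p,\lambda,t$ (the $p$-independence being automatic once the kernel estimate is in hand, since the same kernel works for every $p$).

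The heart of the matter is the $L_1$-norm of $g_{t,\lambda}$. Since $1-\chi_\lambda$ is supported in $\{\abs{\xi}\geq \lambda/2\}$, on that region \eqref{eq:realpart} gives $\Re a_{s,b}(\xi) \geq c^s\abs{\xi}^{sm} - \nu \geq c^s\abs{\xi}^{sm} - \nu_+$, and for $\abs{\xi}\geq\lambda/2$ with $\lambda$ above the stated threshold one has $\nu_+ \leq 2^{-sm-4}c^s\abs{\xi}^{sm}$ (this is exactly where the condition $\lambda>(2^{sm+4}\nu_+/c^s)^{1/(sm)}$ enters, together with $\abs{\xi}\geq\lambda/2$). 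Hence $\abs{\euler^{-ta_{s,b}(\xi)}}\leq \euler^{-tc^s\abs{\xi}^{sm}+t\nu_+}\leq \euler^{-(1-2^{-sm-4})tc^s\abs{\xi}^{sm}}$ on the support of $1-\chi_\lambda$. I would then split off a factor $\euler^{-2^{-sm-4}tc^s\abs{\xi}^{sm}}$, which on $\{\abs{\xi}\geq\lambda/2\}$ is bounded by $\euler^{-2^{-sm-4}tc^s(\lambda/2)^{sm}} = \euler^{-2^{-2sm-4}tc^s\lambda^{sm}}$; comparing exponents, one should instead bound $\nu_+$ by a slightly smaller constant times $\abs\xi^{sm}$ so that after extracting the factor $\euler^{-2^{-sm-4}tc^s\lambda^{sm}}$ (using $\abs\xi\geq\lambda$? — no, only $\abs\xi\geq\lambda/2$; the bookkeeping has to be arranged so the surviving exponent at $\abs\xi=\lambda/2$ still dominates $2^{-sm-4}tc^s\lambda^{sm}$, which forces the threshold and the constants to be chosen as in the statement). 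The remaining factor $\euler^{-\epsilon tc^s\abs{\xi}^{sm}}$ with a fixed fraction $\epsilon$ of the exponent, together with a Riemann--Lebesgue/integration-by-parts argument or a direct subordination estimate, must be shown to have $L_1$-bounded inverse Fourier transform with a bound uniform in $s\in(0,1]$, $\lambda$, $t$ — and this uniformity is the main obstacle.

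To handle that uniform kernel bound I would not estimate $\norm{g_{t,\lambda}}_1$ by pointwise decay of $g_{t,\lambda}$ (which would require differentiating $(1-\chi_\lambda)\euler^{-ta_{s,b}}$ and contending with the nonsmoothness of $a_{s,b}$ at the origin — avoided here since $1-\chi_\lambda$ vanishes near $0$). Instead, the cleanest route is subordination: write the semigroup generated by $(a+\omega)^s$ as a Bochner integral $\euler^{-t(a+\omega)^s} = \int_0^\infty f_{t,s}(\sigma)\euler^{-\sigma(a+\omega)}\drm\sigma$ against the one-sided stable density $f_{t,s}$, reducing everything to Gaussian-type kernels whose $L_1$-norms are $1$, and then absorb $b$ and the frequency cutoff by a perturbation/multiplier argument as already sketched in the paragraph preceding Proposition~\ref{prop:Dissipation} of the paper. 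Alternatively, and perhaps more transparently, one can rescale $\xi = \lambda\zeta$ to normalize the cutoff and observe that the family of multipliers $\zeta\mapsto(1-\chi_1(\zeta))\euler^{-t\lambda^{sm}a_{s,b}(\lambda\zeta)/\lambda^{sm}}$ — or rather a suitably renormalized version — lies in a fixed bounded set of Fourier multipliers on $L_1$; the factor $\euler^{-2^{-sm-4}c^st\lambda^{sm}}$ is pulled out first, the surviving multiplier has a uniformly integrable inverse Fourier transform because its real part still dominates a fixed positive multiple of $\abs{\zeta}^{sm}$ on $\{\abs\zeta\geq 1/2\}$ while being smooth there, and the $s\in(0,1]$ uniformity follows since $\abs\zeta^{sm}\geq \min\{1, \abs\zeta^m\}$ on that region gives a lower bound independent of $s$. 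Either way, the single constant $K$ emerges from a supremum over $s\in(0,1]$ of quantities that are each finite and jointly bounded, which is exactly what the statement asserts.
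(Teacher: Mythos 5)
Your reduction via Young's inequality to an $L_1$-bound on the kernel $g_{t,\lambda}=\F^{-1}\bigl((1-\chi_\lambda)\euler^{-ta_{s,b}}\bigr)$ is exactly the paper's starting point, and your use of the threshold on $\lambda$ to absorb $\nu_+$ matches the final step of the paper's argument. The gap is in the middle: you never actually establish $\norm{g_{t,\lambda}}_{L_1(\R^d)}\lesssim \euler^{-2^{-sm-4}c^st\lambda^{sm}}$. Bounding $\abs{\euler^{-ta_{s,b}(\xi)}}=\euler^{-t\Re a_{s,b}(\xi)}$ via \eqref{eq:realpart} controls only the modulus of the multiplier; a modulus bound gives $\norm{\F^{-1}m}_\infty\leq (2\pi)^{-d}\norm{m}_{L_1}$ but says nothing about $\norm{\F^{-1}m}_{L_1}$ --- for that one must control roughly $d+1$ derivatives of the full complex symbol $(1-\chi_\lambda)\euler^{-ta_{s,b}}$, i.e.\ the pointwise decay $\abs{x^\alpha g_{t,\lambda}(x)}$, which is precisely the step you explicitly decline to carry out. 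Neither of your substitutes closes this as stated: the subordination route writes $\euler^{-t(a+\omega)^s}$ as a positive average of $\euler^{-\sigma(a+\omega)}$, but to extract the decay $\euler^{-ct\lambda^{sm}}$ after inserting the cutoff you need the integer-order kernel estimate $\norm{\F^{-1}[(1-\chi_\lambda)\euler^{-\sigma(a+\omega)}]}_{L_1}\lesssim \euler^{-c'\sigma\lambda^{m}}$, which is itself proved by the very derivative estimates you are avoiding (and subordination does not directly accommodate the lower-order perturbation $b$); the rescaling route's assertion that the renormalized multipliers form a bounded family of $L_1$-Fourier multipliers is, for translation-invariant operators on $L_1$, \emph{equivalent} to the kernel bound being sought, so it is circular without those estimates. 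The visible uncertainty in your exponent bookkeeping ($\abs{\xi}\geq\lambda$ versus $\abs{\xi}\geq\lambda/2$) is a symptom of the same unfinished computation.

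For comparison, the paper's proof first treats the model symbol $\abs{\xi}^{sm}$: by scaling, $\norm{\F^{-1}((1-\chi_\lambda)\euler^{-t\abs{\cdot}^{sm}})}_{L_1}=\norm{k_{t^{1/(sm)}\lambda}}_{L_1}$ with $k_\mu=\F^{-1}((1-\chi_\mu)\euler^{-\abs{\cdot}^{sm}})$, and then $\abs{x^\alpha k_\mu(x)}$ is bounded for $\abs{\alpha}\leq d+1$ by integrating $\partial_\xi^\alpha((1-\chi_\mu(\xi))\euler^{-\abs{\xi}^{sm}})$ (smoothness at the origin is not an issue because the cutoff vanishes there), yielding $\norm{k_\mu}_{L_1}\leq K\euler^{-\mu^{sm}/2^{sm+2}}$. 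The general symbol is then handled by the factorization $S^{(s),p}_t=T_t\widetilde{S}_t$ with $\tilde a=\tfrac{c^s}{2}\abs{\cdot}^{sm}$, commutativity of Fourier multipliers, and the exponential bound $\norm{T_t}\leq M\euler^{\nu t}$, after which the threshold on $\lambda$ enters exactly as you anticipated. To repair your plan, carry out (or cite, as the paper does via \cite{BombachGST-20}) those derivative estimates for the truncated multiplier; everything else in your outline then falls into place.
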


\begin{proof}
 (i) We first show the corresponding estimate for $a_{s,b}(\xi)=\abs{\xi}^{sm}$.
 
 The proof is an adaptation of the proof of \cite[Proposition 3.2]{BombachGST-20}, so we only sketch the details.
 Let $f\in L_p(\R^d)$. Then
 \[(I-P_\lambda)S^{(s),p}_t f = \F^{-1}\bigl((1-\chi_\lambda) \euler^{-ta_{s,b}}\bigr)*f.\]
 With $k_\mu:=\F^{-1}\bigl((1-\chi_\mu) \euler^{-a_{s,b}}\bigr)$ we observe
 \[\norm{\F^{-1}\bigl((1-\chi_\lambda) \euler^{-ta_{s,b}}\bigr)}_{L_1(\R^d)} = \norm{k_{t^{1/(sm)}\lambda}}_{L_1(\R^d)},\]
 so by Young's inequality it suffices to estimate $\norm{k_\mu}_{L_1(\R^d)}$.
 Using that the inverse Fourier transform maps differentiation to multiplication, for $\alpha\in\N_0^d$ we observe
 \[\abs{x^\alpha k_\mu(x)} \leq \frac{1}{(2\pi)^d} \int_{\R^d} \abs{\partial_\xi ^\alpha \bigl((1-\chi_\mu(\xi)) \euler^{-\abs{\xi}^{sm}}\bigr)}\,\drm \xi \quad(x\in\R^d).\]
 Estimating the derivatives in the integrand for $\abs{\alpha}\leq d+1$, we find $K_1\geq 0$ such that
 \[\abs{x^\alpha k_\mu(x)} \leq K_1 \euler^{-\mu^{sm}/(2^{sm+2})}\quad(x\in\R^d).\]
 Thus, there exists $K\geq 0$ such that
 \[\norm{k_\mu}_{L_1(\R^d)} \leq K \euler^{-\mu^{sm}/(2^{sm+2})}\]
 and therefore
 \[\norm{(I-P_\lambda) S^{(s),p}_t f}_p \leq K \euler^{-2^{-sm-2}t\lambda^{sm}}\norm{f}_p.\]
 
 (ii)
 For the general case, we follow the perturbation argument in \cite[Proposition 3.3]{BombachGST-20}. Let $\tilde{a}(\xi):=\tfrac{c^s}{2} \abs{\xi}^{sm}$ and denote the corresponding semigroup by $\widetilde{S}$. Then by (i) we have
 \[\norm{(I-P_\lambda) \widetilde{S}_t f}_p \leq K \euler^{-2^{-sm-3}tc^s\lambda^{sm}}\norm{f}_p.\]
 Moreover, $a_{s,b} = (a_{s,b}-\tilde{a}) + \tilde{a}$ and $a_{s,b}-\tilde{a}$ satisfies an estimate similar to \eqref{eq:realpart}, so the corresponding semigroup $(T_t)_{t\geq 0}$ obeys an exponential bound of the form 
 \[\norm{T_t} \leq M \euler^{\nu t} \quad(t\geq 0).\]
 Thus, since $S^{(s),p}_t = T_t\widetilde{S}_t$ and Fourier multipliers commute, we arrive at
 \begin{align*}
    \norm{(I-P_\lambda) S^{(s),p}_t f}_p & = \norm{S^{(s),p}_t(I-P_\lambda) f}_p
    \leq \norm{T_t} \norm{\widetilde{S}_t(I-P_\lambda) f}_p\\
    & \leq M K \euler^{-t(2^{-sm-3}c^s\lambda^{sm}-\nu)}\norm{f}_p.
\end{align*}
Now, for $\lambda>(2^{sm+4}\nu_+/c^s)^{1/(sm)}$ we have
$2^{-sm-3}c^s\lambda^{sm}-\nu > 2^{-sm-4} c^s \lambda^{sm}$.
\end{proof}
 
In view of Proposition \ref{prop:Logvinenko-Sereda} and Proposition \ref{prop:Dissipation}, we can apply Proposition \ref{lem:spectral+diss-obs} and obtain various weak observability estimates by the cases in Remark \ref{rem:application} with $\gamma_1=1$, $\gamma_2=sm$ and $\gamma_3=1$. We state this as a corollary.

\begin{corollary}
\label{cor:weak_obs}
    Let $p\in[1,\infty]$, $s\in (0,1]$.
    \begin{enumerate}[(a)]
        \item\label{parta} Let $s \leq 1 /m $. Then there exists $T>0$ such that the semigroup $(S_t^{(s),p})_{t\geq 0}$ satisfies a weak observability inequality with some $\alpha\in (0,1)$.
        \item Let $s> 1 / m$. Then for all $T>0$ the semigroup $(S_t^{(s),p})_{t\geq 0}$ satisfies a weak observability inequality with $\alpha=0$.
    \end{enumerate}
\end{corollary}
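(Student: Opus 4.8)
The plan is to feed the two preceding results into Proposition~\ref{lem:spectral+diss-obs} (for part~\eqref{parta}) and into the Lebeau--Robbiano machinery of \cite[Theorem 2.1]{GallaunST-20} (for part (b)), applied with $X=L_p(\R^d)$, $Y=L_p(E)$ for a thick set $E\subset\R^d$, control operator $C=\1_E\in\cL(X,Y)$ (the restriction, with $\norm{C}_{\cL(X,Y)}\leq 1$), semigroup $(S^{(s),p}_t)_{t\geq0}$, and $P=P_\lambda$ for a suitably chosen spectral parameter $\lambda$. First I would record the two estimates that Proposition~\ref{lem:spectral+diss-obs} requires. Since $P_\lambda$ is the Fourier multiplier with symbol $\chi_\lambda(\xi)=\eta(\abs{\xi}/\lambda)$, which vanishes for $\abs{\xi}\geq\lambda$, the function $P_\lambda S^{(s),p}_t x$ has Fourier transform supported in $\{\abs{\xi}\leq\lambda\}\subset[-\lambda,\lambda]^d$ for every $x$ and $t$; hence Proposition~\ref{prop:Logvinenko-Sereda} yields $\norm{P_\lambda S^{(s),p}_t x}_p\leq d_0\euler^{d_1\lambda}\norm{\1_E P_\lambda S^{(s),p}_t x}_{L_p(E)}$, i.e.\ the uncertainty principle \eqref{eq:ass:uncertainty} with the constant function $C_1(t)\equiv d_0\euler^{d_1\lambda}$. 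The dissipation estimate \eqref{eq:ass:dissipation} is exactly Proposition~\ref{prop:Dissipation}, valid for $\lambda>\lambda_0:=(2^{sm+4}\nu_+/c^s)^{1/(sm)}$, with $C_2(t)=K\euler^{-2^{-sm-4}c^st\lambda^{sm}}$. Both $C_1,C_2$ are continuous, $(S^{(s),p}_t)_{t\geq0}$ is exponentially bounded, and $t\mapsto\norm{\1_E S^{(s),p}_t x}_{L_p(E)}$ is measurable (continuous for $p<\infty$), so Proposition~\ref{lem:spectral+diss-obs} applies; moreover these are precisely the data of Remark~\ref{rem:application} with $\gamma_1=1$, $\gamma_2=sm$, $\gamma_3=1$, $d_2=K$ and $d_3=2^{-sm-4}c^s$.

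For part~\eqref{parta}, $s\leq 1/m$ means $sm\leq 1=\gamma_1$. If $sm<1$ we are in case~\eqref{case1} of Remark~\ref{rem:application}, and the condition $\gamma_3>1-\gamma_2/\gamma_1$ there reduces to $1>1-sm$, which always holds; if $sm=1$ we are in case~\eqref{case2}. In either case one takes $T>0$ large enough (and $\lambda>\lambda_0$ as prescribed, which is automatic for large $T$ in case~\eqref{case1}) so that the resulting $\alpha$ lies in $(0,1)$, and then Proposition~\ref{lem:spectral+diss-obs} gives the weak observability inequality \eqref{eq:obs} with this $\alpha$, proving~\eqref{parta}.

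For part (b), $s>1/m$ gives $\gamma_1=1<sm=\gamma_2$. Feeding the same two estimates into Proposition~\ref{lem:spectral+diss-obs} (case~\eqref{case3}) would only produce $\alpha\in(0,1)$; to reach $\alpha=0$ one must run the full Lebeau--Robbiano iteration, so instead I would apply \cite[Theorem 2.1]{GallaunST-20} directly to $(S^{(s),p}_t)_{t\geq0}$ with the whole family $(P_\lambda)_{\lambda>\lambda_0}$ (equivalently, with any sequence $\lambda_k\to\infty$), the spectral inequality of Proposition~\ref{prop:Logvinenko-Sereda}, and the dissipation estimate of Proposition~\ref{prop:Dissipation}. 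Since the growth exponent $\gamma_1=1$ of the spectral inequality is strictly smaller than the decay exponent $\gamma_2=sm$ of the dissipation (and $\gamma_3=1$), the compatibility condition of that theorem relating these exponents is met, and it yields final state observability, i.e.\ \eqref{eq:obs} with $\alpha=0$, for every $T>0$.

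The one point that needs care---and the only genuine obstacle I foresee---is the endpoint $p=\infty$, where $(S^{(s),\infty}_t)_{t\geq0}$ is only a weak$^*$-continuous exponentially bounded semigroup, not a $C_0$-semigroup. For part~\eqref{parta} this is harmless, since Proposition~\ref{lem:spectral+diss-obs} is stated for general semigroups and needs only exponential boundedness together with measurability of $t\mapsto\norm{CS_tx}_Y$, which can be checked from weak$^*$-continuity. For part (b) one should instead invoke the $L_\infty$/weak$^*$ version of the Lebeau--Robbiano strategy as in \cite{BombachGST-20}. Everything else is a routine matter of matching constants.
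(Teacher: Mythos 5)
Your proposal is correct and follows exactly the paper's route: the paper proves the corollary by feeding the Logvinenko--Sereda inequality (Proposition \ref{prop:Logvinenko-Sereda}) and the dissipation estimate (Proposition \ref{prop:Dissipation}) into Proposition \ref{lem:spectral+diss-obs} via the cases of Remark \ref{rem:application} with $\gamma_1=1$, $\gamma_2=sm$, $\gamma_3=1$, invoking \cite[Theorem 2.1]{GallaunST-20} for the case $s>1/m$ to reach $\alpha=0$. Your additional remarks on the choice of $\lambda>\lambda_0$ and on the $p=\infty$ endpoint are consistent with (and slightly more explicit than) what the paper records.
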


In view of Theorem \ref{thm:duality}, by duality we thus obtain statements on cost-uniform $\alpha$-con\-trol\-labi\-li\-ty and approximate null-controllability, and in view of Proposition \ref{prop:solution-stable} also for cost-uniform open-loop stabilizability. Note that for the fractional Laplacian $-A=-(-\Delta)^s$ in $L_2(\R^d)$, the system is not approximately null-controllable for $s< 1 /2$, cf.\ \cite{HuangWW-20,Koenig-20}.
For Corollary \ref{cor:weak_obs}\eqref{parta} even more is true. By invoking that we prove the uncertainty principle and the dissipation estimate for all $\lambda > \lambda_0$ with some $\lambda_0\geq 0$, we get, by using Remark \ref{rem:application}\ref{case1} for $T>0$ large enough, that for all $\alpha\in (0,1)$ there is $T>0$ such that $(S_t^{(s),p})_{t\geq 0}$ satisfies a weak observability inequality.


\end{document}